\documentclass[10pt]{article}

\usepackage{amsmath}
\usepackage{amssymb}
\usepackage{amsthm}
\usepackage{graphics}
\usepackage[latin1]{inputenc}
\usepackage[T1]{fontenc}

\usepackage{enumerate}

\newtheorem{thm}{Theorem}[section]
\newtheorem{prop}{Proposition}[section]

\newtheorem{lemma}{Lemma}[section]
\newtheorem{rem}{Remark}[section]

\newtheorem{defi}{Definition}[section]

\topmargin0pt
\headheight15pt
\headsep15pt
\oddsidemargin0pt
\textheight20cm
\textwidth16cm



\newcommand{\R}{\mathbb{R}}             
\newcommand{\N}{\mathbb{N}}             
\newcommand{\C}{\mathbb{C}}             















\newcommand{\Section}[1]{\section{#1} \setcounter{equation}{0}}


\begin{document}

\title{A survey of non-uniqueness results for the anisotropic Calder\'on problem with disjoint data}
\author{Thierry Daud\'e \footnote{Research supported by the French National Research Projects AARG, No. ANR-12-BS01-012-01, and Iproblems, No. ANR-13-JS01-0006} $^{\,1}$, Niky Kamran \footnote{Research supported by NSERC grant RGPIN 105490-2011} $^{\,2}$ and Francois Nicoleau \footnote{Research supported by the French National Research Project NOSEVOL, No. ANR- 2011 BS0101901} $^{\,3}$\\[12pt]
 $^1$  \small D\'epartement de Math\'ematiques. UMR CNRS 8088, Universit\'e de Cergy-Pontoise, \\
 \small 95302 Cergy-Pontoise, France. \\
\small Email: thierry.daude@u-cergy.fr \\
$^2$ \small Department of Mathematics and Statistics, McGill University,\\ \small  Montreal, QC, H3A 2K6, Canada. \\
\small Email: nkamran@math.mcgill.ca \\
$^3$  \small  Laboratoire de Math\'ematiques Jean Leray, UMR CNRS 6629, \\ \small 2 Rue de la Houssini\`ere BP 92208, F-44322 Nantes Cedex 03. \\
\small Email: francois.nicoleau@math.univ-nantes.fr }





\maketitle


\begin{abstract}

After giving a general introduction to the main known results on the anisotropic Calder\'on problem on $n$-dimensional compact Riemannian manifolds with boundary, we give a motivated review of some recent non-uniqueness results obtained in \cite{DKN2, DKN3} for the anisotropic Calder\'on problem at fixed frequency, in dimension $n\geq 3$, when the Dirichlet and Neumann data are measured on disjoint subsets of the boundary. These non-uniqueness results are of the following nature: given a smooth compact connected Riemannian manifold with boundary $(M,g)$ of dimension $n\geq 3$, we first show that there exist in the conformal class of $g$ an infinite number of Riemannian metrics $\tilde{g}$ such that their corresponding Dirichlet-to-Neumann maps at a fixed frequency coincide when the Dirichlet data $\Gamma_D$ and Neumann data $\Gamma_N$ are measured on disjoint sets and satisfy $\overline{\Gamma_D \cup \Gamma_N} \ne \partial M$. The corresponding conformal factors satisfy a nonlinear elliptic PDE of Yamabe type on $(M,g)$ and arise from a natural but subtle gauge invariance of the Calder\'on when the data are given on disjoint sets. We then present counterexamples to uniqueness in dimension $n\geq 3$ to the anisotropic Calder\'on problem at fixed frequency with data on disjoint sets, which do not arise from this gauge invariance. They are given by cylindrical Riemannian manifolds with boundary having two ends, equipped with a suitably chosen warped product metric. This survey concludes with some remarks on the case of manifolds with corners.



\vspace{0.5cm}

\noindent \textit{Keywords}. Inverse problems, Anisotropic Calder\'on problem, Nonlinear elliptic equations of Yamabe type.


\noindent \textit{2010 Mathematics Subject Classification}. Primaries 81U40, 35P25; Secondary 58J50.

\end{abstract}

\tableofcontents


\Section{Introduction} \label{0}


The anisotropic Calder\'on problem is a problem of geometric analysis that originates in the important physical question of determining whether one can recover properties such as the electrical conductivity of a medium by making measurements at its boundary. The Calder\'on problem is still far from being completely understood, especially where issues of non-uniqueness are concerned \cite{GT2, IUY2, KS1, KS2, KLO, LO1, LO2}. Our goal in this paper is to give a motivated account of some non-uniqueness results that have been recently obtained in \cite{DKN2,DKN3} for the Calder\'on problem in the case in which the Dirichlet and Neumann data are measured on disjoint subsets of the boundary. At the same time, we will also give a survey of the main uniqueness results that have been obtained so far on Calder\'on problem in the general setting of Riemannian manifolds with boundary. As a complement to the review provided in this paper, we refer to the surveys \cite{GT2, KS2, Sa, U1} for a description of the current state of the art on the general anisotropic Calder\'on problem and also to \cite{DSFKSU, DSFKLS, GSB, GT1, KS1, LaTU, LaU, LeU} for important contributions to the question of uniqueness.

The anisotropic Calder\'on problem can be naturally formulated as a problem of geometric analysis in terms of the Dirichlet-to-Neumann map, or for short the DN map, for the Laplacian on Riemannian manifolds with boundary. We first recall the definition of the DN map in the general formulation that has been given by Lee and Uhlmann \cite{LeU}. Let $(M, g)$ denote an $n$-dimensional smooth compact connected Riemannian manifold with smooth boundary $\partial M$, and let $\Delta_{LB}$ be the positive Laplace-Beltrami operator on $(M,g)$, given in local coordinates by
$$
\Delta_{LB}=  -\Delta_g = -\frac{1}{\sqrt{|g|}} \partial_i \left( \sqrt{|g|} g^{ij} \partial_j \right).
$$
It is standard (see for instance \cite{KKL}) that the Laplace-Beltrami operator $-\Delta_g$ with Dirichlet boundary conditions on $\partial M$ is self-adjoint on $L^2(M, dVol_g)$ and has pure point spectrum $\{ \lambda_j\}_{j \geq 1}$ with $0 < \lambda_1 < \lambda_2 \leq \dots \leq \lambda_j \to +\infty$.

We consider the Dirichlet problem
\begin{equation} \label{Eq00}
  \left\{ \begin{array}{cc} -\Delta_g u = \lambda u, & \textrm{on} \ M, \\ u = \psi, & \textrm{on} \ \partial M. \end{array} \right.
\end{equation}
where the frequency $\lambda \in \R$ is assumed to lie outside the Dirichlet spectrum, that is $\lambda \notin \{ \lambda_j\}_{j \geq 1}$. We know (see for instance \cite{Sa, Ta1}) that for any such $\lambda$  and that for any $\psi \in H^{1/2}(\partial M)$, there exists a unique weak solution $u \in H^1(M)$ of the Dirichlet problem (\ref{Eq00}). This allows us to define the Dirichlet-to-Neumann (DN) map as the operator $\Lambda_{g}(\lambda)$ from $H^{1/2}(\partial M)$ to $H^{-1/2}(\partial M)$ given by
\begin{equation} \label{DN-Abstract}
  \Lambda_{g}(\lambda) (\psi) = \left( \partial_\nu u \right)_{|\partial M},
\end{equation}
where $u$ is the unique solution of (\ref{Eq00}) and $\left( \partial_\nu u \right)_{|\partial M}$ is its normal derivative with respect to the unit outer normal $\nu$ on $\partial M$. The latter is defined in the weak sense as an element of $H^{-1/2}(\partial M)$ by
$$
  \left\langle \Lambda_{g}(\lambda) \psi | \phi \right \rangle = \int_M \langle du, dv \rangle_g \, dVol_g,
$$
where $\psi \in H^{1/2}(\partial M)$ and $\phi \in H^{1/2}(\partial M)$, where $u$ is the unique solution of the Dirichlet problem (\ref{Eq00}), and where $v$ is any element of $H^1(M)$ such that $v_{|\partial M} = \phi$. When $\psi$ is sufficiently smooth, this definition coincides with the usual one in local coordinates, that is
\begin{equation} \label{DN-Coord}
\partial_\nu u = \nu^i \partial_i u.
\end{equation}


As mentioned earlier, we are interested in the case in which the Dirichlet and Neumann data are measured on disjoint subsets of the boundary, and we are therefore led to introduce the \emph{partial} DN maps, which are defined as follows. Let $\Gamma_D$ and $\Gamma_N$ denote open subsets of $\partial M$. The partial DN map $\Lambda_{g,\Gamma_D,\Gamma_N}(\lambda)$ is defined as the DN map $\Lambda_g(\lambda)$ restricted to the case in which the Dirichlet data are prescribed on $\Gamma_D$ and the Neumann data are measured on $\Gamma_N$. More precisely, consider the Dirichlet problem
\begin{equation} \label{Eq0}
  \left\{ \begin{array}{cc} -\Delta_g u = \lambda u, & \textrm{on} \ M, \\ u = \psi, & \textrm{on} \ \Gamma_D, \\ u = 0, & \textrm{on} \ \partial M \setminus \Gamma_D. \end{array} \right.
\end{equation}
We define $\Lambda_{g,\Gamma_D,\Gamma_N}(\lambda)$ as the operator acting on functions $\psi \in H^{1/2}(\partial M)$ with $\textrm{supp}\,\psi \subset \Gamma_D$ by
\begin{equation} \label{Partial-DNmap}
  \Lambda_{g,\Gamma_D,\Gamma_N}(\lambda) (\psi) = \left( \partial_\nu u \right)_{|\Gamma_N},
\end{equation}
where $u$ is the unique solution of (\ref{Eq0}).

The anisotropic partial Calder\'on problem can now be stated as follows in its raw form: \emph{If a pair of partial DN maps $\Lambda_{g_1,\Gamma_D, \Gamma_N}(\lambda)$ and $\Lambda_{g_2,\Gamma_D, \Gamma_N}(\lambda)$ coincide at a fixed frequency $\lambda$, can one conclude that the metrics $g_1$ and $g_2$ are the same}?

There are a number of natural gauge invariances for this problem which are of geometric origin and which imply that the answer to the question stated above is necessarily going to be negative. These lead to refined formulations of the Calder\'on problem that we shall present shortly, and that constitute the actual statement of this inverse problem. Before doing so, let us review the gauge invariances in question. First, it results from the definition (\ref{Eq0}) - (\ref{Partial-DNmap}) that the partial DN map $\Lambda_{g, \Gamma_D, \Gamma_N}(\lambda)$ is invariant when the metric $g$ is pulled back by any diffeomorphism of $M$ that restrict to the identity on $\Gamma_D \cup \Gamma_N$, \textit{i.e.}
\begin{equation} \label{Inv-Diff}
  \forall \phi \in \textrm{Diff}(M) \ \textrm{such that} \ \phi_{|\Gamma_D \cup \Gamma_N} = Id, \quad \Lambda_{\phi^*g, \Gamma_D, \Gamma_N}(\lambda) = \Lambda_{g, \Gamma_D, \Gamma_N}(\lambda).
\end{equation}

In dimension two and for zero frequency $\lambda = 0$, the scaling action induced on the Laplacian by conformal changes of metric leads to an additional gauge invariance of the DN map that applies to this specific setting. Indeed, recall that if $\dim M=2$, then
$$
\Delta_{cg} = \frac{1}{c} \Delta_g,
$$
for any smooth function $c >0$, so that
\begin{equation} \label{Inv-Conf}
\forall c \in C^\infty(M) \ \textrm{such that} \ c >0 \ \textrm{and} \ c_{|\Gamma_N} = 1, \quad \Lambda_{c g, \Gamma_D, \Gamma_N}(0) = \Lambda_{g, \Gamma_D, \Gamma_N}(0),
\end{equation}
since the unit outer normal vectors $\nu_{cg}$ and $\nu_g$ are identical on $\Gamma_N$.

From the preceding remarks, it follows that the gauge-invariant formulation of our inverse problem, which is referred to as the \emph{anisotropic Calder\'on conjecture}, is the following. \\

\noindent \textbf{(Q1)}: \emph{Let $M$ be a smooth compact connected manifold with smooth boundary $\partial M$ and let $g,\, \tilde{g}$ denote smooth Riemannian metrics on $M$ and let $\Gamma_D, \Gamma_N$ be open subsets of $\partial M$. Assume that $\lambda \in \R$ does not belong to $\sigma(-\Delta_g) \cup \sigma(-\Delta_{\tilde{g}})$ and suppose that
$$
  \Lambda_{g,\Gamma_D, \Gamma_N}(\lambda) = \Lambda_{\tilde{g},\Gamma_D, \Gamma_N}(\lambda).
$$
Does it follow that
$$
  g = \tilde{g},
$$
up to the gauge invariance (\ref{Inv-Diff}) if $\dim M \geq 3$ and up to the gauge invariances (\ref{Inv-Diff}) - (\ref{Inv-Conf}) if $\dim M = 2$ and $\lambda = 0$}? \\

Several subcases of the above problem may naturally be considered:
\begin{itemize}
\item \textbf{Full data}: $\Gamma_D = \Gamma_N = \partial M$, in which case, we denote the DN map simply by $\Lambda_g(\lambda)$.

\item \textbf{Local data}: $\Gamma_D = \Gamma_N = \Gamma$, where $\Gamma$ can be any nonempty open subset of $\partial M$. In that case, we denote the DN map by $\Lambda_{g, \Gamma}(\lambda)$.

\item \textbf{Data on disjoint sets}: $\Gamma_D$ and $\Gamma_N$ are disjoint open sets of $\partial M$.
\end{itemize}

If $\dim M \geq 3$, one may also consider an inverse problem of a different and simpler nature by assuming that the Riemannian manifolds $(M,g)$ and $(M,\tilde{g})$ belong to the same conformal class, that is $\tilde{g} = c g$ for some strictly positive smooth function c. We thus think of $g$ as a given background metric and the problem is to recover the unknown conformal factor $c$ from the DN map $\Lambda_{c g,\Gamma_D, \Gamma_N}(\lambda)$. In that case, the statement of anisotropic Calder\'on problem reduces to the following: \\

\noindent \textbf{(Q2)}: \emph{Let $(M,g)$ be a smooth compact connected Riemannian manifold of dimension $n\geq 3$ with smooth boundary $\partial M$ and let $\Gamma_D, \Gamma_N$ denote open subsets of $\partial M$. Let $c$ be a smooth strictly positive function on $M$ and assume that $\lambda \in \R$ does not belong to $\sigma(-\Delta_g) \cup \sigma(-\Delta_{c g})$. If
$$
  \Lambda_{c g,\Gamma_D, \Gamma_N}(\lambda) = \Lambda_{g,\Gamma_D, \Gamma_N}(\lambda),
$$
does there exist a diffeomorphism $\phi: \, M \longrightarrow M$ with $\phi_{| \, \Gamma_D \cup \Gamma_N} = Id$ such that}
\begin{equation} \label{Inv-Conformal}
  \phi^* g = c g?
\end{equation}
It is important to note that since any diffeomorphism $\phi: \, M \longrightarrow M$  which satisfies $\phi^* g = c g$ and $\phi_{|\Gamma} = Id$ for a non-empty open subset $\Gamma$ of $\partial M$ must be the identity \cite{Li},
there is no ambiguity arising from the diffeomorphism invariance of the DN map in the solution of the anisotropic Calder\'on problem \textbf{(Q2)}. The condition (\ref{Inv-Conformal}) may therefore be replaced by the condition
\begin{equation} \label{Inv-Conformal-1}
  c = 1, \quad \textrm{on} \ M.
\end{equation}

One may also extend the scope of the anisotropic Calder\'on problem to include the presence of an external potential. We shall see in Proposition \ref{Link-c-to-V} below that this question bears a close relation to \textbf{(Q2)}. We thus consider the time-independent Schr\"odinger equation on $(M,g)$ with a potential $V \in L^\infty(M)$
\begin{equation} \label{Eq0-Schrodinger}
  \left\{ \begin{array}{cc} (-\Delta_g + V) u = \lambda u, & \textrm{on} \ M, \\ u = \psi, & \textrm{on} \ \Gamma_D, \\ u = 0, & \textrm{on} \ \partial M \setminus \Gamma_D. \end{array} \right.
\end{equation}
If $\lambda$ does not belong to the Dirichlet spectrum of $-\Delta_g +V$, then it is a standard result that for any $\psi \in H^{1/2}(\partial M)$, there exists a unique weak solution $u \in H^1(M)$ of (\ref{Eq0-Schrodinger}) (see for example \cite{DSFKSU, Sa}). We thus have a partial Dirichlet-to-Neumann map $\Lambda_{g, V, \,\Gamma_D, \Gamma_N}(\lambda)$ for all $\psi \in H^{1/2}(\partial M)$ with supp $\psi \subset \Gamma_D$, defined by
\begin{equation} \label{DN-Abstract-Schrodinger}
  \Lambda_{g, V,\Gamma_D, \Gamma_N}(\lambda) (\psi) = \left( \partial_\nu u \right)_{|\Gamma_N},
\end{equation}
where $u$ is the unique solution of (\ref{Eq0-Schrodinger}) and $\left( \partial_\nu u \right)_{|\Gamma_N}$ denotes as usual normal derivative of $u$ with respect to the unit outer normal vector $\nu$ on $\Gamma_N$. We assume in analogy with \textbf{(Q2)} that $g$ is a fixed background metric. The Calder\'on problem is now to determine the unknown potential $V \in L^\infty(M)$ from the knowledge of the DN map $\Lambda_{g, V, \,\Gamma_D, \Gamma_N}(\lambda)$: \\

\noindent \textbf{(Q3)}: \emph{Let $(M,g)$ be a smooth compact connected Riemannian manifold with smooth boundary $\partial M$ and let $\Gamma_D, \Gamma_N$ be open subsets of $\partial M$. Let $V_1$ and $V_2$ be potentials in $L^\infty(M)$ and assume that  $\lambda \in \R$ does not belong to the Dirichlet spectra of $-\triangle_g + V_1$ and $-\triangle_g + V_2$. Suppose that
$$
  \Lambda_{g, V_1, \Gamma_D, \Gamma_N}(\lambda) = \Lambda_{g, V_2, \Gamma_D, \Gamma_N}(\lambda).
$$
Does this imply that}
$$
  V_1 = V_2?
$$

As mentioned above, there is a close connection between \textbf{(Q2)} and \textbf{(Q3)} when $\dim M \geq 3$, which is induced by the transformation law for the Laplace-Beltrami operator under conformal changes of metric, that is,
\begin{equation} \label{ConformalScaling}
  -\Delta_{c^4 g} u = c^{-(n+2)} \left( -\Delta_g + q_{g,c} \right) \left( c^{n-2} u \right),
\end{equation}
where
\begin{equation} \label{q}
  q_{g,c} = c^{-n+2} \Delta_{g} c^{n-2}.
\end{equation}	
Indeed, we have:

\begin{prop} \label{Link-c-to-V}
Let $\lambda \in \R$ be fixed. Assume that $c$ is a smooth strictly positive function on $M$ such that $c = 1$ on $\Gamma_D \cup \Gamma_N$. \\
1. If $\Gamma_D \cap \Gamma_N = \emptyset$, then
\begin{equation} \label{Link}
	\Lambda_{c^4 g, \Gamma_D, \Gamma_N}(\lambda) = \Lambda_{g, V_{g,c,\lambda}, \Gamma_D,\Gamma_N}(\lambda),
\end{equation}
where
\begin{equation} \label{Vgc}
  V_{g,c,\lambda} = q_{g,c} + \lambda(1-c^4), \quad q_{g,c} = c^{-n+2} \Delta_{g} c^{n-2}.
\end{equation}	
2. If $\Gamma_D \cap \Gamma_N \ne \emptyset$ and $\partial_{\nu} c = 0$ on $\Gamma_N$, then (\ref{Link}) also holds.
\end{prop}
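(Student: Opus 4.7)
\medskip

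The strategy is to use the conformal transformation law (\ref{ConformalScaling}) to turn the Dirichlet problem for $-\Delta_{c^4 g} - \lambda$ into a Dirichlet problem for a Schrödinger operator $-\Delta_g + V_{g,c,\lambda} - \lambda$ on the fixed background $(M,g)$. Given $\psi \in H^{1/2}(\partial M)$ with $\mathrm{supp}\,\psi \subset \Gamma_D$, let $u \in H^1(M)$ be the unique solution of the Dirichlet problem (\ref{Eq0}) with metric $c^4 g$, and set
$$ v := c^{n-2} u. $$
A direct substitution into (\ref{ConformalScaling}) shows that $-\Delta_{c^4 g} u = \lambda u$ is equivalent to
$$ (-\Delta_g + q_{g,c}) v = \lambda c^4 v, $$
which rearranges as $(-\Delta_g + V_{g,c,\lambda}) v = \lambda v$ with $V_{g,c,\lambda} = q_{g,c} + \lambda(1-c^4)$. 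This is the desired Schrödinger equation.

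Next I would verify that $v$ and $u$ carry exactly the same Dirichlet data. Since $c \equiv 1$ on $\Gamma_D \cup \Gamma_N$, we have $c^{n-2} \equiv 1$ on $\Gamma_D$, so $v_{|\Gamma_D} = u_{|\Gamma_D} = \psi$ and $v_{|\partial M \setminus \Gamma_D} = u_{|\partial M \setminus \Gamma_D} = 0$. Thus $v$ is the unique weak solution of the Schrödinger Dirichlet problem (\ref{Eq0-Schrodinger}) with potential $V_{g,c,\lambda}$ and boundary data $\psi$. (One should also note that the assumption that $\lambda$ lies outside the Dirichlet spectrum of $-\Delta_{c^4 g}$ translates, via the same change of unknown, into $\lambda$ lying outside the Dirichlet spectrum of $-\Delta_g + V_{g,c,\lambda}$, ensuring that both problems are well-posed simultaneously.)

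It then remains to compare the Neumann traces on $\Gamma_N$. Since $c \equiv 1$ on $\Gamma_N$, the metrics $g$ and $c^4 g$ agree there, hence the unit outer normals coincide: $\nu_{c^4 g} = \nu_g =: \nu$ on $\Gamma_N$. Differentiating $v = c^{n-2} u$ along $\nu$ and using $c = 1$ on $\Gamma_N$ gives
$$ \partial_\nu v \, = \, (n-2)\,(\partial_\nu c)\, u \; + \; \partial_\nu u \qquad \text{on } \Gamma_N. $$
The main (and essentially only) subtlety is to kill the extra term $(n-2)(\partial_\nu c)\,u$. In case 1, $\Gamma_N \subset \partial M \setminus \Gamma_D$, so the Dirichlet condition forces $u_{|\Gamma_N} = 0$ and the extra term vanishes. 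In case 2, $u_{|\Gamma_N}$ need not vanish on $\Gamma_D \cap \Gamma_N$, but the hypothesis $\partial_\nu c = 0$ on $\Gamma_N$ directly cancels that contribution. In either case one concludes $\partial_\nu v = \partial_\nu u$ on $\Gamma_N$, that is,
$$ \Lambda_{g, V_{g,c,\lambda}, \Gamma_D, \Gamma_N}(\lambda)(\psi) \, = \, \Lambda_{c^4 g, \Gamma_D, \Gamma_N}(\lambda)(\psi), $$
which is (\ref{Link}). The main (minor) obstacle is really bookkeeping: tracking the factors $c^{n-2}$, the identification of the two unit normals through $c_{|\Gamma_N} = 1$, and treating the Neumann trace in the $H^{-1/2}$ sense via the weak formulation, where the same computation can be justified by substituting $v = c^{n-2} u$ and a test function $\phi$ into the Dirichlet form $\int_M \langle du, d\phi\rangle_{c^4 g}\, dVol_{c^4 g}$ and using the conformal scaling of the volume form and inverse metric.
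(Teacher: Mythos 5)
Your argument is correct: the substitution $v = c^{n-2}u$ combined with the conformal scaling law (\ref{ConformalScaling}), the identification of the Dirichlet data and of the unit normals on $\Gamma_N$ (where $c=1$), and the cancellation of the term $(n-2)(\partial_\nu c)\,u$ either by $u_{|\Gamma_N}=0$ (case 1) or by $\partial_\nu c = 0$ (case 2) is precisely the intended mechanism. The present paper does not reproduce the proof (it refers to \cite{DKN3}), but your computation is the standard one given there, so no discrepancy or gap to report.
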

We refer to \cite{DKN3} for the proof of this result.
$$
$$

As an application of the above result, one can show that \textbf{(Q3)} implies \textbf{(Q2)} in the case of local data, meaning that $\Gamma_D = \Gamma_N = \Gamma$, where $\Gamma$ is an arbitrary open subset in $\partial M$. We now state this result, the proof of which is again given in \cite{DKN3}:

\begin{prop} \label{Q3-to-Q2}
  If $\Gamma_D = \Gamma_N = \Gamma$ is any open set in $\partial M$ and $\lambda \in \R$, then \textbf{(Q3)} implies \textbf{(Q2)}.
\end{prop}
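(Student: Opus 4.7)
The plan is to reduce \textbf{(Q2)} in the local case to \textbf{(Q3)} using the conformal change identity (\ref{ConformalScaling}) and Proposition \ref{Link-c-to-V}, and then upgrade the overdetermined elliptic PDE produced by \textbf{(Q3)} to the trivial solution by a unique continuation argument. Write $\tilde c := c^{1/4}$ so that $cg = \tilde c^{4} g$, and suppose $\Lambda_{cg,\Gamma,\Gamma}(\lambda) = \Lambda_{g,\Gamma,\Gamma}(\lambda)$; in view of the remark following (\ref{Inv-Conformal-1}), the goal is to show $c \equiv 1$ on $M$.

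The first step is boundary determination: a standard pseudodifferential symbol calculus for the partial DN map on the open set $\Gamma$ recovers the full Taylor expansion at $\Gamma$ of the conformal factor relating two conformally related metrics, forcing $c \equiv 1$ to infinite order on $\Gamma$. In particular $\tilde c = 1$ and $\partial_\nu \tilde c = 0$ on $\Gamma$, so the hypothesis of part 2 of Proposition \ref{Link-c-to-V} is satisfied and that proposition yields
$$
\Lambda_{g, V_{g,\tilde c,\lambda}, \Gamma, \Gamma}(\lambda) \;=\; \Lambda_{cg, \Gamma, \Gamma}(\lambda) \;=\; \Lambda_{g, \Gamma, \Gamma}(\lambda) \;=\; \Lambda_{g, 0, \Gamma, \Gamma}(\lambda).
$$
Applying the standing hypothesis \textbf{(Q3)} to the pair $(V_{g,\tilde c,\lambda}, 0)$ forces $V_{g,\tilde c,\lambda} \equiv 0$ on $M$, that is,
$$
\tilde c^{\,-(n-2)}\, \Delta_g \tilde c^{\,n-2} + \lambda (1 - \tilde c^{\,4}) = 0 \qquad \textrm{on } M.
$$

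Setting $u := \tilde c^{\,n-2} > 0$, this becomes the Yamabe-type semilinear equation
$$
-\Delta_g u + \lambda u - \lambda\, u^{(n+2)/(n-2)} = 0,
$$
with the overdetermined Cauchy data $u = 1$, $\partial_\nu u = 0$ on $\Gamma$ inherited from Step 1. The trivial solution is $u \equiv 1$; writing $v := u - 1$ and expanding the smooth nonlinearity by the mean value theorem gives a linear elliptic equation $\Delta_g v = W(x)\, v$ with $W$ smooth and bounded (positivity of $u$ is used here). With zero Cauchy data on the nonempty open set $\Gamma \subset \partial M$, Aronszajn's unique continuation theorem — applied after a local extension of $M$ across $\Gamma$, into which $v$ extends by zero — gives $v \equiv 0$ in a neighborhood of $\Gamma$, and connectedness of $M$ then yields $v \equiv 0$ on $M$, so $c \equiv 1$. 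I expect this last unique continuation step to be the main obstacle: it is precisely what makes full boundary determination (both the vanishing of $c-1$ and of $\partial_\nu c$ on $\Gamma$) essential, and it is the mechanism that breaks down in the disjoint data setting, thereby permitting the counterexamples discussed in the abstract.
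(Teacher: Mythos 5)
Your proposal is correct and follows essentially the same route as the paper's proof (deferred to \cite{DKN3}): boundary determination from the local DN map on $\Gamma$ (as in \cite{KY}) gives $c=1$ and $\partial_\nu c=0$ on $\Gamma$, so part 2 of Proposition \ref{Link-c-to-V} together with \textbf{(Q3)} forces $V_{g,c^{1/4},\lambda}=0$, and unique continuation from the vanishing Cauchy data on $\Gamma$ for the resulting Yamabe-type equation yields $c\equiv 1$. The only point worth stating explicitly is that the hypothesis $\lambda\notin\sigma(-\Delta_{cg})$ in \textbf{(Q2)} guarantees, via the conformal scaling law (\ref{ConformalScaling}), that $\lambda$ is not a Dirichlet eigenvalue of $-\Delta_g+V_{g,c^{1/4},\lambda}$, so \textbf{(Q3)} may indeed be applied to the pair $\left(V_{g,c^{1/4},\lambda},0\right)$.
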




In the remainder of this introduction, we give a brief survey of some of the most important known results on the Calder\'on conjecture. We first remark that the most complete results known for Problems \textbf{(Q1)}, \textbf{(Q2)} and \textbf{(Q3)} apply to the case of \emph{zero frequency} $\lambda = 0$, assuming full data, that is $\Gamma_D = \Gamma_N = \partial M$, or local data, meaning $\Gamma_D = \Gamma_N = \Gamma$ with $\Gamma$ any open subset of $M$. In the particular case of dimension $2$, the anisotropic Calder\'on problem \textbf{(Q1)} for global and local data with $\lambda = 0$ has been given a positive answer for the case of compact connected surfaces in \cite{LaU, LeU}. We also refer to \cite{ALP} for results of a similar nature on \textbf{(Q1)} on bounded domains of $\R^n$, for global and local data, under the weaker regularity hypothesis that the metric is only $L^\infty$. A positive answer to \textbf{(Q1)} for global and local data and zero frequency $\lambda = 0$ in dimension $3$ or higher has been given in \cite{LeU} assuming that the underlying Riemannian manifold is real analytic, compact and connected, with real analytic boundary, and that it further satisfies certain specific topological assumptions. These assumptions were later weakened in \cite{LaU, LaTU}. Similarly, \textbf{(Q1)} has been answered positively for compact connected Einstein manifolds with boundary in \cite{GSB}.

If we don't assume the analyticity of the underlying metrics, the general anisotropic Calder\'on problem \textbf{(Q1)} in dimension $n\geq 3$  is still a major open problem, whether one is dealing with the case of full or local data. Some important results have however been obtained recently on \textbf{(Q2)} and \textbf{(Q3)} in \cite{DSFKSU, DSFKLS, KS1}, for special classes of smooth compact connected Riemannian manifolds with boundary which are referred to as \emph{admissible}. By definition, admissible manifolds $(M,g)$ are \emph{conformally transversally anisotropic},
$$
  M \subset \subset \R \times M_0, \quad g = c ( e \oplus g_0),
$$
where $(M_0,g_0)$ is an $n-1$ dimensional smooth compact connected Riemannian manifold with boundary, $e$ is the Euclidean metric on the real line and $c$ is a smooth strictly positive function in the cylinder $\R \times M_0$. Furthermore the geodesic ray transform on the transversal manifold $(M_0, g_0)$ is assumed to be injective. This is the case for instance if the transversal manifold is \emph{simple}, meaning that any two points in $M_0$ can be connected by a unique geodesic depending smoothly on the endpoints, and $\partial M_0$ is strictly convex as a submanifold of $(M,g) = c ( e \oplus g_0)$. It has been shown in \cite{DSFKSU, DSFKLS} that for admissible manifolds, the conformal factor $c$ is uniquely determined from the knowledge of the DN map at zero frequency, so that both \textbf{(Q2)} and \textbf{(Q3)} have positive answers. These results have been further extended to the case of partial data in \cite{KS1}. We refer to \cite{GT1, Is, IUY1} for additional results in the  case of local data and to the surveys \cite{GT2, KS2} for further references.

For bounded domains $\Omega$ of $\R^n, \ n \geq 3$ endowed with the Euclidean metric, there are also positive results  for problem \textbf{(Q3)}, for data measured on distinct subsets $\Gamma_D, \Gamma_N$ of $\partial M$ which are not assumed to be disjoint, \cite{KSU}. The hypothesis is that the sets $\Gamma_D, \Gamma_N$ should overlap, allowing however for $\Gamma_D \subset \partial \Omega$ to possibly have very small measure, and requiring then that $\Gamma_N$ have slightly larger measure than $\partial \Omega \setminus \Gamma_D$. These results have been generalized in \cite{KS1} to the case of admissible Riemannian manifolds, using limiting Carleman weights $\varphi$,
which make it possible to decompose the boundary of $M$ as
$$
  \partial M = \partial M_+ \cup \partial M_{\textrm{tan}} \cup \partial M_-,
$$
where
$$
  \partial M_\pm = \{ x \in \partial M: \ \pm \partial_\nu \varphi(x) > 0 \}, \quad \partial M_{\textrm{tan}} = \{ x \in \partial M: \  \partial_\nu \varphi(x) = 0 \}.
$$
Under additional geometric assumptions on the transverse manifold $(M_0,g_0)$, it is shown in \cite{KS1} that the answer to \textbf{(Q3)} is positive if $\Gamma_D$ contains $\partial M_- \cup \Gamma_a$ and $\Gamma_N$ contains $\partial M_+ \cup \Gamma_a$, where $\Gamma_a$ is some open subset of $\partial M_{\textrm{tan}}$. This implies that $\Gamma_D$ and $\Gamma_N$ must overlap in order to have uniqueness in this setting. The only exception occurs in the case where $\partial M_{\textrm{tan}}$ has zero measure, in which case it is enough to take $\Gamma_D = \partial M_-$ and $\Gamma_N = \partial M_+$ to have uniqueness for \textbf{(Q3)} (see Theorem 2.3 in  \cite{KS1}). Note in this case that $\Gamma_D \cap \Gamma_N = \partial M_- \cap \partial M_+ = \emptyset$.

In the case  of data measured on \emph{disjoint sets}, the only known results prior to \cite{DKN2, DKN3} appear to be those of \cite{KS1}, which hold for the case of zero frequency $\lambda = 0$ and concern classes of admissible Riemannian manifolds, and those of  \cite{IUY2} which apply to the case of a potential for a Schr\"odinger operator on a two-dimensional domain homeomorphic to a disc. For example in the latter work, it is shown that when the boundary of the domain is partitioned into eight clockwise-ordered arcs $\Gamma_1, \Gamma_2, \dots, \Gamma_8$, then the potential is determined when the Dirichlet data are supported on $S = \Gamma_2 \cup \Gamma_6$ and the Neumann data are observed on $R = \Gamma_4 \cup \Gamma_8$, hence answering \textbf{(Q3)} positively in this special setting.

Finally, we mention some related papers concerned with the \emph{hyperbolic} anisotropic Calder\'on problem, which is the case in which the partial DN map is assumed to be known at all frequencies $\lambda$, see \cite{Rak, LO1, LO2, KLO}. We refer to \cite{KKL} for a detailed discussion of the hyperbolic anisotropic Calder\'on problem and to \cite{KKLM} for the link between the hyperbolic DN map and the elliptic DN map at all frequencies.

The rest of our paper is organized as follows. In Section \ref{1}, we recall from  \cite{DKN3} the definition of a new type of gauge invariance for the anisotropic Calder\'on problem with data on disjoint sets. This new gauge invariance corresponds to special rescalings of the fixed background metric $g$ by a conformal factor which solves a suitably chosen boundary value problem for a nonlinear elliptic PDE of Yamabe type. Section \ref{Countergen} is devoted to the description of the counterexamples to uniqueness for the anisotropic Calder\'on problem modulo this new gauge invariance. These take the form of Schr\"odinger operators on cylindrical warped products of dimension $n \geq 2$, or conformal rescalings of cylindrical warped products of dimension $n \geq 3$. The paper concludes with some remarks on the case of manifolds with corners.

\section{A new gauge invariance for the Calder\'on problem with disjoint data} \label{1}

We now describe a new kind of gauge invariance for the Calder\'on problem, which was first introduced in \cite{DKN3} following earlier work \cite{DKN2} in which we showed through explicit counterexamples that the answers to \textbf{(Q2)} (and thus \textbf{(Q1)}) as well as \textbf{(Q3)} were \emph{negative} when the Dirichlet and Neumann data are measured on disjoint sets of the boundary. These examples take the form of special rotationally invariant toric cylinders of dimensions $2$ and $3$ . More precisely, we constructed in \cite{DKN2} an infinite number of pairs of non isometric metrics and potentials having the same partial DN maps when $\Gamma_D \cap \Gamma_N = \emptyset$ and for any fixed frequency $\lambda$ not belonging to the Dirichlet spectra of the corresponding Laplace-Beltrami or Schr\"odinger operators. It is a particularly noteworthy feature of this construction that any pair of such metrics belongs to the same conformal class, with the conformal factor relating the two metrics satisfying a specific nonlinear ODE. We subsequently showed in \cite{DKN3} that the mechanism underlying the non-uniqueness results of \cite{DKN2} can be broadly generalized to provide counterexamples to uniqueness for the anisotropic Calder\'on problem for any smooth compact connected Riemannian manifold with smooth boundary, of dimension three or higher, with Dirichlet data and Neumann data given on disjoint subsets $\Gamma_D$ and $\Gamma_N$ such that $\overline{\Gamma_D \cup \Gamma_N} \ne \partial M$.
These counterexamples are also closely tied to rescalings of a fixed metric $g$ by a conformal factor, which now satisfies a nonlinear elliptic PDE of Yamabe type with appropriately chosen boundary conditions instead of a nonlinear ODE (see Theorem \ref{Main-1}). The proof of the existence of smooth positive solutions of this nonlinear equation is achieved using the standard technique of lower and upper solutions. We emphasize that this technique works thanks to the crucial assumption $\overline{\Gamma_D \cup \Gamma_N} \ne \partial M$, that allows us to choose appropriately the boundary conditions appearing in the nonlinear equation. We now recall these results from \cite{DKN3} by first presenting in the form of a proposition the elliptic boundary value problem of Yamabe type that is at the basis of this additional and somewhat hidden gauge invariance. We have:

\begin{prop} \label{Main}
Let $(M,g)$ be a smooth compact connected Riemannian manifold of dimension $n\geq 3$ with smooth boundary $\partial M$ and let $\lambda \in \R$ not belong to the Dirichlet spectrum $\sigma(-\Delta_g)$. Let $\Gamma_D, \Gamma_N$ be open sets of $\partial M$ such that $\Gamma_D \cap \Gamma_N = \emptyset$. If there exists a smooth strictly positive function $c$ satisfying
\begin{equation} \label{Main-EDP}
  \left\{ \begin{array}{cc} \Delta_{g} c^{n-2} + \lambda ( c^{n-2} - c^{n+2}) = 0, & \textrm{on} \ M, \\
	c = 1, & \textrm{on} \ \Gamma_D \cup \Gamma_N, \end{array} \right.
\end{equation}
then the conformally rescaled Riemannian metric $\tilde{g} = c^4 g$ satisfies
$$
  \Lambda_{\tilde{g},\Gamma_D, \Gamma_N}(\lambda) = \Lambda_{g,\Gamma_D, \Gamma_N}(\lambda).  	
$$
\end{prop}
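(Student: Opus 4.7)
The plan is to reduce the statement directly to Proposition \ref{Link-c-to-V} by showing that the hypothesized PDE (\ref{Main-EDP}) is engineered precisely so as to force the Schr\"odinger potential produced by the conformal rescaling to vanish identically. Since $\Gamma_D \cap \Gamma_N = \emptyset$ and $c = 1$ on $\Gamma_D \cup \Gamma_N$, Part 1 of Proposition \ref{Link-c-to-V} applies and gives
$$
\Lambda_{c^4 g, \Gamma_D, \Gamma_N}(\lambda) = \Lambda_{g, V_{g,c,\lambda}, \Gamma_D, \Gamma_N}(\lambda),
$$
with $V_{g,c,\lambda} = q_{g,c} + \lambda(1-c^4)$ and $q_{g,c} = c^{-(n-2)} \Delta_g c^{n-2}$. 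So the whole problem reduces to computing $V_{g,c,\lambda}$ from the PDE.

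Next, I would rewrite (\ref{Main-EDP}) as $\Delta_g c^{n-2} = \lambda c^{n-2}(c^4 - 1)$ and read off
$$
q_{g,c} = c^{-(n-2)} \Delta_g c^{n-2} = \lambda(c^4 - 1),
$$
so that $V_{g,c,\lambda} \equiv 0$ on $M$. Plugging this back into the identity from Proposition \ref{Link-c-to-V} collapses its right-hand side to $\Lambda_{g,\Gamma_D,\Gamma_N}(\lambda)$ and yields the claimed equality.

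One subtlety to dispose of along the way is that for $\Lambda_{\tilde g,\Gamma_D,\Gamma_N}(\lambda)$ to be well-defined one needs $\lambda \notin \sigma(-\Delta_{\tilde g})$, a fact not explicitly assumed in the statement. I would obtain this from the conformal transformation law (\ref{ConformalScaling}): a Dirichlet eigenfunction $u$ of $-\Delta_{\tilde g}$ at the eigenvalue $\lambda$ corresponds via $v = c^{n-2} u$ to a $v \in H^1_0(M)$ satisfying $(-\Delta_g + q_{g,c})v = \lambda c^4 v$; using $q_{g,c} = \lambda(c^4-1)$, this reduces to $-\Delta_g v = \lambda v$ with $v_{|\partial M} = 0$, and since $\lambda \notin \sigma(-\Delta_g)$ one gets $v = 0$, hence $u = 0$.

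The main obstacle here is conceptual rather than computational: once Proposition \ref{Link-c-to-V} is available, the proof is a one-line algebraic cancellation, and the whole content of the statement is that the nonlinear equation (\ref{Main-EDP}) is reverse-engineered so that the potential picked up under the conformal change of metric vanishes identically. The genuinely difficult step, which lies outside the scope of this proposition, is to exhibit nontrivial smooth positive solutions $c$ of the Yamabe-type boundary value problem (\ref{Main-EDP}); as the authors indicate, this is handled by the method of sub- and super-solutions under the crucial hypothesis $\overline{\Gamma_D \cup \Gamma_N} \ne \partial M$, which provides the freedom to prescribe $c$ on the remaining part of the boundary so as to produce a barrier pair.
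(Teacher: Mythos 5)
Your proposal is correct and takes essentially the route the paper intends (the proof deferred to \cite{DKN3}): since $\Gamma_D \cap \Gamma_N = \emptyset$ and $c = 1$ on $\Gamma_D \cup \Gamma_N$, Proposition \ref{Link-c-to-V} applies, and the Yamabe-type equation (\ref{Main-EDP}) gives $q_{g,c} = \lambda(c^4 - 1)$, hence $V_{g,c,\lambda} \equiv 0$, so $\Lambda_{c^4 g,\Gamma_D,\Gamma_N}(\lambda) = \Lambda_{g,\Gamma_D,\Gamma_N}(\lambda)$. Your extra check that $\lambda \notin \sigma(-\Delta_{\tilde g})$ via the transformation law (\ref{ConformalScaling}) is a sound and welcome addition that makes the statement's DN map well defined.
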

We refer to \cite{DKN3} for a proof of the above proposition.
We also note that the nonlinear PDE (\ref{Main-EDP}) satisfied by the conformal factor $c$ may be re-expressed in more geometric terms by making use of the well-known fact that the potential $q_{g,c}$ in (\ref{q}) can be written as
	\begin{equation} \label{ScalarCurvature}
	  q_{g,c} = \frac{n-2}{4(n-1)} \left( Scal_g - c^4 \, Scal_{c^4 g} \right),
	\end{equation}
	where $Scal_g$ and $Scal_{c^4 g}$ denote the scalar curvatures associated to $g$ and $\tilde{g} = c^4 g$ respectively. Indeed, it is easily seen that $c$ will satisfy (\ref{Main-EDP}) is and only if
\begin{equation} \label{GeometricInterpretation}
  Scal_{c^4 g} = \frac{Scal_g + \frac{4(n-1)}{n-2} \lambda (1-c^4)}{c^4}.
\end{equation}

It follows from Proposition \ref{Main} that in order to construct counterexamples to uniqueness for the Calder\'on problem in dimension $n\geq 3$ with data on disjoint subsets of the boundary, it is sufficient to find a conformal factor $c$ satisfying the nonlinear PDE of Yamabe type (\ref{Main-EDP}), such that $c \ne 1$ on $M$ (see \ref{Inv-Conformal-1}). This can been done by using the well known technique of lower and upper solutions. Indeed, we are interested in solutions $w = c^{n-2}$ of the nonlinear elliptic PDE:
\begin{equation} \label{Eqw}
  \left\{ \begin{array}{cc} \Delta_g w + f(w) =0 , & \textrm{on} \ M, \\ w = \eta, & \textrm{on} \ \partial M, \end{array} \right.
\end{equation}
where $f(w) = \lambda (w-w^{\frac{n+2}{n-2}})$ and $\eta$ is a smooth function on $\partial M$ such that $\eta = 1$ on $\Gamma_D \cup \Gamma_N$. We may even more generally consider the nonlinear Dirichlet problem
\begin{equation} \label{GeneralDP}
  \left\{ \begin{array}{cc} \Delta_g w + f(x,w) =0 , & \textrm{on} \ M, \\ w = \eta, & \textrm{on} \ \partial M, \end{array} \right.
\end{equation}
where $f$ is a smooth function on $M \times \R$ and $\eta$ is a smooth function on $\partial M$.


If we can find a lower solution ${\underline{w}}$ and an upper solution ${\overline{w}}$ satisfying ${\underline{w}} \leq {\overline{w}}$ on $M$, then there exists a solution $w \in C^{\infty}(\overline{M})$ of (\ref{GeneralDP}) such that ${\underline{w}} \leq w \leq {\overline{w}}$ on $M$ (see for example \cite{Sat}, Thm 2.3.1. or \cite{Ta2}, Section 14.1). We briefly recall from \cite{DKN3} the construction of such a solution : we pick $\mu>0$ such that $|\partial_w f(x,w)| \leq \mu$ for $w \in [\min \  {\underline{w}} , \max \ {\overline{w}}]$.  Then, we define recursively a sequence $(w_k)$ by $w_0 = {\underline{w}}$, $w_{k+1} = \Phi(w_k)$ where $\Phi(w) = \varphi$ is obtained by solving
\begin{equation}
\Delta_g \varphi - \mu \varphi = -\mu w - f(x,w) \ ,\ \varphi_{|\partial M} = \eta.
\end{equation}
Using the maximum principle, we see that this sequence satisfies
\begin{equation}\label{sequence}
{\underline{w}}=w_0 \leq w_1 \leq \cdots \leq w_k \cdots \leq {\overline{w}}.
\end{equation}
We therefore deduce that $w = \displaystyle\lim_{k \to \infty} w_k$ is a solution of (\ref{Eqw}). The details of the construction are given in the above references \cite{Sat, Ta2}.

We thus obtain the following elementary result, the proof of which is given in \cite{DKN3}:

\begin{prop} \label{NonlinearDirichletPb}
For all $\lambda \geq 0$, (resp. for all $\lambda < 0$), and for all smooth positive functions $\eta$ such that $\eta \ne 1$ on $\partial M$, (resp. $\eta \lneq 1$ on $\partial M$),  there exists a positive solution $w \in C^{\infty}(\overline{M})$ of (\ref{Eqw}) satisfying $w \ne 1$ on $M$.
\end{prop}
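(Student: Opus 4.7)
The plan is to apply the lower/upper solution method recalled immediately above, with explicit constant (or zero) barriers whose structure reflects the sign of $\lambda$. Throughout, write $f(w)=\lambda(w-w^{(n+2)/(n-2)})=\lambda w\bigl(1-w^{4/(n-2)}\bigr)$, which vanishes exactly at $w=0$ and $w=1$: for $\lambda\geq 0$ one has $f\geq 0$ on $(0,1]$ and $f\leq 0$ on $[1,\infty)$, while the signs flip when $\lambda<0$. Since constants have zero Laplacian, a positive constant $c$ is a lower (resp. upper) solution of (\ref{Eqw}) if and only if $f(c)\geq 0$ and $c\leq \eta$ on $\partial M$ (resp. $f(c)\leq 0$ and $c\geq \eta$ on $\partial M$).

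For $\lambda\geq 0$, I take $\underline{w}=\min(1,\min_{\partial M}\eta)>0$ and $\overline{w}=\max(1,\max_{\partial M}\eta)$. Both barrier inequalities follow from the sign analysis of $f$, the boundary inequalities and the ordering $\underline{w}\leq \overline{w}$ hold by construction, so the sub/supersolution theorem produces a smooth solution $w$ with $\underline{w}\leq w\leq \overline{w}$, hence $w>0$ on $\overline{M}$. Since $w|_{\partial M}=\eta$ is not identically $1$, the solution cannot be identically $1$ on $M$.

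For $\lambda<0$ the sign of $f$ on $(0,1)$ has flipped, so no positive constant strictly less than $1$ can serve as a lower solution. The cleanest choice is then $\underline{w}\equiv 0$ and $\overline{w}\equiv 1$, both of which solve (\ref{Eqw}) since $f(0)=f(1)=0$; the boundary inequalities $0\leq \eta\leq 1$ are exactly the hypothesis $\eta\lneq 1$. The sub/supersolution theorem thus yields a smooth $w$ with $0\leq w\leq 1$. The only delicate point is strict positivity: inspecting the iteration $w_0=0$, $w_{k+1}=\Phi(w_k)$, the first iterate solves $(\Delta_g-\mu)w_1=0$ on $M$ with $w_1|_{\partial M}=\eta>0$, and since $\mu>0$ the strong maximum principle for the operator $\Delta_g-\mu$ forces $w_1>0$ on $\overline{M}$; the monotonicity (\ref{sequence}) then gives $w\geq w_1>0$. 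As before, $w|_{\partial M}=\eta$ is not identically $1$, so $w$ is not identically $1$ on $M$.

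The only real obstacle is the lower barrier in the $\lambda<0$ regime: the absence of a positive constant subsolution below $1$ forces the degenerate choice $\underline{w}\equiv 0$, and strict positivity of the limit must then be recovered from the iterative scheme itself rather than from the lower barrier. Everything else reduces to elementary sign checks for the cubic-like nonlinearity $f$ and to the boundedness of the barriers chosen around the equilibrium $w\equiv 1$.
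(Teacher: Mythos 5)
Your proof is correct and follows exactly the route the paper indicates (and defers to \cite{DKN3} for): the method of lower and upper solutions with the monotone iteration recalled before the proposition, using constant barriers adapted to the sign of $\lambda$ (namely $\min(1,\min\eta)$ and $\max(1,\max\eta)$ for $\lambda\geq 0$, and $0$ and $1$ for $\lambda<0$). Your treatment of the one delicate point---strict positivity when $\lambda<0$, obtained from the first iterate $w_1$ solving $(\Delta_g-\mu)w_1=0$ with boundary value $\eta>0$ together with the strong maximum principle and the monotonicity (\ref{sequence})---is sound, and deducing $w\not\equiv 1$ from $w|_{\partial M}=\eta\not\equiv 1$ is the intended reading of ``$w\ne 1$ on $M$''.
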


In order to use the existence results of Proposition \ref{NonlinearDirichletPb} for the construction of a conformal factor $c$ satisfying (\ref{Main-EDP}) and $c \ne 1$ on $M$, we need to be able to choose $\eta \ne 1$ on $\partial M$. We thus make the crucial assumption on the disjoint Dirichlet and Neumann data that
\begin{equation} \label{Main-Hyp}
  \overline{\Gamma_D \cup \Gamma_N} \ne \partial M.
\end{equation}
Putting together then the results of Proposition \ref{Main} and Proposition \ref{NonlinearDirichletPb}, we obtain:

\begin{thm} \label{Main-1}
  Let $(M,g)$ be a smooth compact connected Riemannian manifold of dimension $n\geq 3$ with smooth boundary $\partial M$. Let $\Gamma_D, \Gamma_N$ be open subsets of $\partial M$ such that $\Gamma_D \cap \Gamma_N = \emptyset$ and $\overline{\Gamma_D \cup \Gamma_N} \ne \partial M$. Consider a conformal factor $c \ne 1$ on $M$ whose existence is given in Proposition \ref{NonlinearDirichletPb}, defined as a smooth solution of the nonlinear Dirichlet problem
\begin{equation} \label{Main-EDP-1}
  \left\{ \begin{array}{cc} \Delta_{g} c^{n-2} + \lambda (c^{n-2} - c^{n+2}) = 0, & \textrm{on} \ M, \\
	c^{n-2} = \eta, & \textrm{on} \ \partial M, \end{array} \right.
\end{equation}
where $\eta$ is a smooth positive function on $\partial M$ satisfying $\eta = 1$ on $\Gamma_D \cup \Gamma_N$ and $\eta \ne 1$ on $\partial M \setminus (\Gamma_D \cup \Gamma_N)$. Then the Riemannian metric $\tilde{g} = c^4 g$ with $c \ne 1$ on $M$ satisfies
$$
  \Lambda_{\tilde{g},\Gamma_D, \Gamma_N}(\lambda) = \Lambda_{g,\Gamma_D, \Gamma_N}(\lambda).  	
$$
\end{thm}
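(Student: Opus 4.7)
The plan is to read Theorem \ref{Main-1} as a pure assembly of the two preceding results: Proposition \ref{NonlinearDirichletPb} produces the conformal factor, and Proposition \ref{Main} converts it into the DN-map equality. The role of the hypothesis $\overline{\Gamma_D \cup \Gamma_N} \ne \partial M$ is solely to leave enough room on the boundary to prescribe a Dirichlet datum that forces the solution to be nonconstant.

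First I would construct the boundary profile $\eta$. Since $\overline{\Gamma_D \cup \Gamma_N}$ is a proper closed subset of $\partial M$, the set $U := \partial M \setminus \overline{\Gamma_D \cup \Gamma_N}$ is a nonempty open subset of $\partial M$. Using a bump function compactly supported in $U$, I can manufacture a smooth strictly positive function $\eta$ on $\partial M$ with $\eta \equiv 1$ on $\Gamma_D \cup \Gamma_N$ and $\eta \not\equiv 1$ on $\partial M \setminus (\Gamma_D \cup \Gamma_N)$. In the regime $\lambda \geq 0$ any such $\eta$ works; in the regime $\lambda < 0$ I would take the bump to be nonpositive, so that additionally $\eta \lneq 1$ on $\partial M$, matching the stronger hypothesis of Proposition \ref{NonlinearDirichletPb} in that case.

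Next, I apply Proposition \ref{NonlinearDirichletPb} to this $\eta$ to obtain a smooth strictly positive $w \in C^\infty(\overline{M})$ solving
$$
\Delta_g w + \lambda\bigl(w - w^{(n+2)/(n-2)}\bigr) = 0 \ \text{on} \ M, \qquad w = \eta \ \text{on} \ \partial M,
$$
with $w \not\equiv 1$. Setting $c := w^{1/(n-2)}$, the positivity and smoothness of $w$ ensure that $c$ is smooth and strictly positive on $\overline{M}$, that $c^{n-2} = w = \eta$ on $\partial M$ (so in particular $c = 1$ on $\Gamma_D \cup \Gamma_N$), and that $c$ solves (\ref{Main-EDP-1}) on $M$. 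Since $w \ne 1$ somewhere on $M$, the same holds for $c$.

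At this point every hypothesis of Proposition \ref{Main} is satisfied: $\Gamma_D \cap \Gamma_N = \emptyset$, $\lambda \notin \sigma(-\Delta_g)$, and $c$ is a smooth strictly positive solution of (\ref{Main-EDP}) with $c = 1$ on $\Gamma_D \cup \Gamma_N$. Invoking that proposition yields $\Lambda_{\tilde g,\Gamma_D,\Gamma_N}(\lambda) = \Lambda_{g,\Gamma_D,\Gamma_N}(\lambda)$ for $\tilde g = c^4 g$, which is the conclusion. There is no real obstacle here beyond a careful case split on the sign of $\lambda$ when choosing $\eta$; all of the analytic substance, namely the sub-/super-solution existence argument and the identification of the Laplace–Beltrami operator on $(M, c^4 g)$ with a Schr\"odinger operator on $(M,g)$ via (\ref{ConformalScaling})–(\ref{Vgc}), has already been absorbed into the two cited propositions.
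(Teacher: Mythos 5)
Your proposal is correct and matches the paper's own argument: the theorem is obtained precisely by choosing a boundary datum $\eta$ as you describe, invoking Proposition \ref{NonlinearDirichletPb} to get the solution $w = c^{n-2} \ne 1$, and then applying Proposition \ref{Main} to the resulting conformal factor $c$, which is exactly how the paper assembles Theorem \ref{Main-1}. Your explicit case split on the sign of $\lambda$ when building $\eta$ (taking a nonpositive bump for $\lambda < 0$ so that $\eta \lneq 1$) is the right reading of the hypotheses of Proposition \ref{NonlinearDirichletPb}.
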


We interpret the content of Theorem \ref{Main-1} as defining a new gauge invariance for the anisotropic Calder\'on problem with disjoint data. This definition is formalized in the following way:

\begin{defi}[New gauge invariance] \label{Gauge0}
  Let $(M,g)$ and $(M,\tilde{g})$ be smooth compact connected Riemannian manifolds of dimension $n\geq 3$ with smooth boundary $\partial M$. Let $\lambda \in \R$  not belong to the union of the Dirichlet spectra of $-\Delta_g$ and $-\Delta_{\tilde{g}}$. Let $\Gamma_D, \Gamma_N$ be open subsets of $\partial M$ such that $\Gamma_D \cap \Gamma_N = \emptyset$ and $\overline{\Gamma_D \cup \Gamma_N} \ne \partial M$. We say that $g$ and $\tilde{g}$ are gauge related if there exists a smooth positive conformal factor $c$ such that: \\
\begin{equation} \label{Gauge}
   \left\{ \begin{array}{rl} \tilde{g} & = c^4 g, \\
	                           \Delta_{g} c^{n-2} + \lambda (c^{n-2} - c^{n+2}) & = 0, \textrm{on} \ M, \\
	                           c & = 1, \textrm{on} \ \Gamma_D \cup \Gamma_N, \\
														 c & \ne 1, \textrm{on} \ \partial M \setminus (\Gamma_D \cup \Gamma_N).
														\end{array} \right.
   \end{equation}
In that case, we have: $\Lambda_{\tilde{g}, \Gamma_D, \Gamma_N}(\lambda) = \Lambda_{g, \Gamma_D, \Gamma_N}(\lambda)$.
\end{defi}

\begin{rem}
  In dimension $2$, the gauge invariance introduced in Definition \ref{Gauge0} for the anisotropic Calder\'on problem with disjoint data is not relevant except for the case of zero frequency. Indeed, the nonlinear PDE (\ref{Gauge}) that the conformal factor $c$ should satisfy becomes
\begin{equation} \label{EDP-Dim2}
  \lambda (1 - c^4) = 0, \ \textrm{on} \ M.
\end{equation}
In other words, $c$ must be identically equal to $1$ if $\lambda \ne 0$. Recalling that in dimension $2$ and for zero frequency, a conformal transformation is already known to be a gauge invariance of the anisotropic Calder\'on problem, we see that our construction will not lead to new counterexamples to uniqueness in dimension $2$, for any frequency $\lambda$.
\end{rem}

We conclude this Section by stating a version of the anisotropic Calder\'on conjecture with disjoint data modulo the previously defined gauge invariance. \\

\noindent \textbf{(Q4)} \emph{Let $M$ be a smooth compact connected manifold with smooth boundary $\partial M$ and let $g,\, \tilde{g}$ be smooth Riemannian metrics on $M$. Let $\Gamma_D, \Gamma_N$ be any open sets of $\partial M$ such that $\Gamma_D \cap \Gamma_N = \emptyset$ and $\lambda \in \R$ not belong to $\sigma(-\Delta_g) \cup \sigma(-\Delta_{\tilde{g}})$. If $\Lambda_{g,\Gamma_D, \Gamma_N}(\lambda) = \Lambda_{\tilde{g},\Gamma_D, \Gamma_N}(\lambda)$, is it true that $g = \tilde{g}$ up to the following gauge invariances: \\
1. (\ref{Inv-Diff}) in any dimension, \\
2. (\ref{Inv-Conf}) if $\dim M = 2$ and $\lambda = 0$}, \\
3. (\ref{Gauge}) if $\dim M \geq 3$ and $\overline{\Gamma_D \cup \Gamma_N} \ne \partial M$?


\Section{Counterexamples to uniqueness for the anisotropic Calder\'on problem with disjoint data modulo the gauge invariance}\label{Countergen}

\subsection{The case of Schr\"odinger operators on cylindrical warped products in dimension $n\geq 3$} \label{2}

In this subsection, we consider the anisotropic Calder\'on problem \textbf{(Q3)} for Schr\"odinger operators on a fixed smooth compact connected Riemannian manifold $(M,g)$ of dimension $n\geq 2$, with smooth boundary $\partial M$, under the assumption that the Dirichlet and Neumann data are measured on disjoint subsets of the boundary. In view of the link (\ref{Link}) between the Calder\'on problems \textbf{(Q2)} and \textbf{(Q3)}, one might think that the previously constructed counterexamples to uniqueness for the anisotropic Calder\'on problem \textbf{(Q2)} in dimension $3$ or higher could be used to construct counterexamples to uniqueness for \textbf{(Q3)}. It turns out that this is not the case. To this effect, we recall first the following lemma from \cite{DKN3}:

\begin{lemma}\label{lemmafactor}
Let $(M,g)$ be a smooth compact connected Riemannian manifold of dimension $n\geq 3$ with smooth boundary $\partial M$. Consider two smooth conformal factors $c_1$ and $c_2$ such that $c:= \frac{c_2}{c_1}$ satisfies
\begin{equation}\label{factor}
\Delta_{c_1^4 g} c^{n-2} + \lambda (c^{n-2} - c^{n+2}) = 0 \ \rm{on}\  M.
\end{equation}
Then,
\begin{equation} \label{c2}
  V_{g,c_1,\lambda} = V_{g,c_2,\lambda}.
\end{equation}
\end{lemma}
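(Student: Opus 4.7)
The plan is to use the conformal scaling formula \eqref{ConformalScaling} twice in composition: first going from $g$ to the intermediate metric $g_1 := c_1^4 g$ via the conformal factor $c_1$, and then going from $g_1$ to $c_2^4 g = c^4 g_1$ via the conformal factor $c$. The key observation is that hypothesis \eqref{factor} is precisely the condition that forces the induced potential $V_{g_1, c, \lambda}$ arising in the intermediate step to vanish, so that the two-step reduction reproduces the direct one-step reduction from $g$ to $c_2^4 g$ via $c_2$.

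First, I would compute the intermediate potential. By definition, $V_{g_1,c,\lambda} = q_{g_1,c} + \lambda(1-c^4)$ where $q_{g_1,c} = c^{-(n-2)} \Delta_{g_1} c^{n-2}$. Rearranging \eqref{factor} gives $\Delta_{g_1} c^{n-2} = \lambda c^{n-2}(c^4 - 1)$, hence $q_{g_1,c} = \lambda(c^4-1)$, and therefore $V_{g_1,c,\lambda} \equiv 0$. Next, applying \eqref{ConformalScaling} directly from $g$ to $c_2^4 g$ with factor $c_2$, I obtain, for any smooth $u$,
$$
(-\Delta_{c_2^4 g} - \lambda) u = c_2^{-(n+2)} \bigl[ (-\Delta_g + V_{g,c_2,\lambda} - \lambda)(c_2^{n-2} u)\bigr].
$$
Applying \eqref{ConformalScaling} in the two-step form — first from $g$ to $g_1$ with factor $c_1$, then from $g_1$ to $c^4 g_1$ with factor $c$ — and using the vanishing of $V_{g_1,c,\lambda}$ to collapse the intermediate term, the same operator factorizes as
$$
(-\Delta_{c_2^4 g} - \lambda) u = c^{-(n+2)} c_1^{-(n+2)} \bigl[ (-\Delta_g + V_{g,c_1,\lambda} - \lambda)(c_1^{n-2} c^{n-2} u)\bigr].
$$

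The identities $c_2 = c_1 c$, $c_2^{n-2} = c_1^{n-2} c^{n-2}$, and $c_2^{n+2} = c_1^{n+2} c^{n+2}$ show that both right-hand sides share the prefactor $c_2^{-(n+2)}$ and act on the same function $c_2^{n-2} u$. Equating them yields
$$
(-\Delta_g + V_{g,c_1,\lambda})(c_2^{n-2} u) = (-\Delta_g + V_{g,c_2,\lambda})(c_2^{n-2} u),
$$
for every $u \in C^\infty(M)$. Since $c_2$ is smooth and strictly positive, multiplication by $c_2^{n-2}$ is a bijection of $C^\infty(M)$, so the two multiplication operators $V_{g,c_1,\lambda}$ and $V_{g,c_2,\lambda}$ must coincide, proving \eqref{c2}.

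I expect the only delicate point to be the careful verification that the intermediate potential $V_{g_1,c,\lambda}$ really does vanish under \eqref{factor} — once that is in place, the composition of conformal scalings and the cancellation of prefactors is a routine algebraic bookkeeping with no analytic obstacles, all functions involved being smooth and strictly positive on the compact manifold $M$.
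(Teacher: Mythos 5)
Your argument is correct: the hypothesis (\ref{factor}) does force $V_{c_1^4 g,\,c,\lambda}=0$, the identity $(-\Delta_{c^4 g}-\lambda)u = c^{-(n+2)}\bigl[(-\Delta_g + V_{g,c,\lambda}-\lambda)(c^{n-2}u)\bigr]$ follows from (\ref{ConformalScaling}) together with (\ref{Vgc}), and the prefactor bookkeeping $c^{-(n+2)}c_1^{-(n+2)}=c_2^{-(n+2)}$, $c_1^{n-2}c^{n-2}=c_2^{n-2}$ is right, so equating the one-step and two-step factorizations and cancelling the common terms yields $V_{g,c_1,\lambda}=V_{g,c_2,\lambda}$. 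Note that this survey does not reproduce a proof of the lemma (it refers to \cite{DKN3}), so there is nothing here to compare line by line; your operator-composition route is a valid self-contained proof, and it is equivalent to the more pedestrian pointwise computation one could do instead: using (\ref{ConformalScaling}) to write $\Delta_{c_1^4 g}c^{n-2} = c_1^{-(n+2)}\bigl(\Delta_g c_2^{n-2} - q_{g,c_1}\, c_2^{n-2}\bigr)$ and substituting into (\ref{factor}) gives, after dividing by $c_2^{n-2}$, the scalar identity $q_{g,c_2}-q_{g,c_1}=\lambda\,(c_2^4-c_1^4)$, which is exactly (\ref{c2}). The operator version buys a cleaner conceptual statement (the intermediate potential vanishes, so the two conformal reductions agree), while the direct computation avoids having to quantify over test functions $u$; both are complete.
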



Now, let $\Gamma_D, \Gamma_N$ be open subsets of $\partial M$ such that $\Gamma_D \cap \Gamma_N = \emptyset$ and $\overline{\Gamma_D \cup \Gamma_N} \ne \partial M$. Consider two smooth conformal factors $c_1$ and $c_2$ such that the metrics $G=c_1^4 g $ and $\tilde{G}=c_2^4 g$ are gauge equivalent in the sense of Definition \ref{Gauge0}. This implies in particular that $\left(\frac{c_2}{c_1}\right)^4$ satisfies (\ref{factor}) and that $\Lambda_{G, \Gamma_D, \Gamma_N}(\lambda) = \Lambda_{\tilde{G}, \Gamma_D, \Gamma_N}(\lambda)$. We obtain from (\ref{Link}) that
$$
  \Lambda_{g, V_{g,c_1,_\lambda}, \Gamma_D, \Gamma_N}(\lambda) = \Lambda_{g, V_{g,c_2,\lambda}, \Gamma_D, \Gamma_N}(\lambda).
$$
But Lemma \ref{lemmafactor} implies in turn that  $ V_{g,c_1,\lambda} = V_{g,c_2,\lambda}$. As a consequence, the gauge invariance for the anisotropic Calder\'on problem \textbf{(Q2)} with disjoint data highlighted in Section \ref{1} is not a gauge invariance for the corresponding anisotropic Calder\'on problem \textbf{(Q3)}.

In \cite{DKN2, DKN3}, we found however counterexamples to uniqueness for the anisotropic Calder\'on problem \textbf{(Q3)} with disjoint sets on a specific class of \emph{smooth} compact connected Riemannian cylinders equipped with a \emph{warped product} metric and having \emph{two ends}, \textit{i.e.} whose boundary has two distinct connected components. The warped product structure is crucial here since it allows separation of variables with respect to one variable. In particular, imposing that the potentials in the Schr\"odinger equation only depend on the euclidean direction of the cylinder, the global Dirichlet problem reduces to a countable family of ODEs in the separated variable, parametrized by the angular momenta arising from the diagonalization of the Laplacian on the transverse manifold. As a consequence, the global DN map can be decomposed into a direct sum of one-dimensional (partial) DN maps corresponding to each of the above ODEs and powerful 1D inverse spectral techniques can be used to study them. Even more important for the construction of our counterexamples is the fact that the smooth cylinder has two ends. Indeed, it will be shown below - through an explicit construction - that the information contained in the partial DN maps radically differs according to whether we measure the Dirichlet and Neumann data (even disjoint) on a same connected component of the boundary, or if we measure them on two distinct connected components. In the latter case, the information contained in the partial DN maps will be shown to be insufficient to conclude to uniqueness. Finally, we mention that if we allow manifolds which are not smooth, \textit{e.g.} manifolds with corners, we can remove the assumption on the non-connectedness of the boundary (see section \ref{4} below).

Let us be more explicit and recall here the construction of these counterexamples. We consider cylinders $M = [0,1]\times K$, where $K$ is an arbitrary $(n-1)$-dimensional closed manifold, equipped with a Riemannian metric of the form
\begin{equation} \label{Metric}
  g = f^4(x) [dx^2 + g_K],
\end{equation}
where $f$ is a smooth strictly positive function on $[0,1]$ and $g_K$ denotes a smooth Riemannian metric on $K$. Clearly, $(M,g)$ is a $n$-dimensional warped product cylinder and the boundary $\partial M$ has two connected components, that is $\partial M = \Gamma_0 \cup \Gamma_1$ where $\Gamma_0 = \{0\} \times K$ and $\Gamma_1 = \{1\} \times K$ correspond to the two ends of $(M,g)$. Let $-\triangle_g$ denote the positive Laplace-Beltrami operator on $(M,g)$ and consider a potential $V = V(x) \in L^\infty(M)$ (or $L^2(M)$) and $\lambda \in \R$ such that $\lambda \notin \{ \lambda_j\}_{j \geq 1}$ where $\{ \lambda_j\}_{j \geq 1}$ is the Dirichlet spectrum of $-\Delta_g + V$. Given Dirichlet and Neumann data $\Gamma_D, \Gamma_N$ on $\partial M$, let us define the DN map $\Lambda_{g,V,\Gamma_D,\Gamma_N}(\lambda)$ as in (\ref{DN-Abstract-Schrodinger}).

We first construct the global DN map $\Lambda_{g,V}(\lambda)$ and then obtain $\Lambda_{g,V,\Gamma_D,\Gamma_N}(\lambda)$ by restricting the Dirichlet and Neumann data to $\Gamma_D$ and $\Gamma_N$. The boundary $\partial M$ of $M$ having two disjoint components $\partial M = \Gamma_0 \cup \Gamma_1$, let us decompose the Sobolev spaces $H^s(\partial M)$ as $H^s(\partial M) = H^s(\Gamma_0) \oplus H^s(\Gamma_1)$ for any $s \in \R$ and use the vector notation
$$
 \varphi = \left( \begin{array}{c} \varphi^0 \\ \varphi^1 \end{array} \right),
$$
to denote the elements $\varphi$ of $H^s(\partial M) = H^s(\Gamma_0) \oplus H^s(\Gamma_1)$. Since the DN map is a linear operator from $H^{1/2}(\partial M)$ to $H^{-1/2}(\partial M)$, it has the structure of an operator valued $2 \times 2$ matrix
\begin{equation} \label{h1}
  \Lambda_g(\lambda) = \left( \begin{array}{cc} \Lambda_{g,\Gamma_0,\Gamma_0}(\lambda) & \Lambda_{g,\Gamma_1, \Gamma_0}(\lambda) \\ \Lambda_{g,\Gamma_0, \Gamma_1}(\lambda) & \Lambda_{g,\Gamma_1,\Gamma_1}(\lambda) \end{array} \right),
\end{equation}
whose components are operators from $H^{1/2}(K)$ to $H^{-1/2}(K)$.

Let us now use the warped product structure of $(M,g)$ to simplify the expression of $\Lambda_{g,V}(\lambda)$. First, setting $v = f^{n-2} u$, the Dirichlet problem (\ref{Eq0-Schrodinger}) can be written as (see \cite{DKN3})

\begin{equation} \label{Eq1}
  \left\{ \begin{array}{rcl}
	\left[ -\partial^2_x - \triangle_K + q_f + (V-\lambda) f^4 \right] v & = & 0, \ \textrm{on} \ M, \\
	v & = & f^{n-2} \psi, \ \textrm{on} \ \partial M,
	\end{array} \right.
\end{equation}
where $-\triangle_K$ denotes the positive Laplace-Beltrami operator on $(K,g_K)$ and $q_f = \frac{(f^{n-2})''}{f^{n-2}}$.

Second, we introduce the Hilbert basis consisting of the harmonics $(Y_k)_{k \geq 0}$ of the Laplace-Beltrami operator $-\triangle_K$, \textit{i.e.} the $Y_k$'s are the normalized eigenfunctions of $-\triangle_K$ associated to the eigenvalues $\mu_k$ ordered (counting multiplicity) such that
$$
  0 = \mu_0 < \mu_1 \leq \mu_2 \leq \dots \leq \mu_k \to \infty.
$$
Clearly, we can separate variables in the equation (\ref{Eq1}) by looking for the unique solution $v$ in the form
$$
  v = \sum_{k \geq 0} v_k(x) Y_k(\omega).
$$
The functions $v_k$ satisfy then the 1D boundary value problems
\begin{equation} \label{Eq2}
  \left\{ \begin{array}{c} -v_k'' + [ q_f + (V-\lambda) f^4] v_k = -\mu_k v_k, \ \textrm{on} \ [0,1], \\
   v_k(0) = f^{n-2}(0) \psi^0_k, \quad v_k(1) = f^{n-2}(1) \psi^1_k, \end{array} \right.
\end{equation}
where we wrote the Dirichlet data $\psi = (\psi^0, \psi^1) \in H^{1/2}(\partial M)$ using their Fourier expansions as $\psi^0 = \sum_{k \geq 0} \psi^0_k Y_k$, $\psi^1 = \sum_{k \geq 0} \psi^1_k Y_k$.

It is also clear form the above decomposition that the DN map can be diagonalized in the Hilbert basis $\{ Y_k \}_{k \geq 0}$. Precisely, it was shown in \cite{DKN2, DKN3} that on each Hilbert space $<Y_k>$ spanned by the harmonic $Y_k$, the DN map acts as a multiplication operator by a $2 \times 2$ matrix given explicitly by
\begin{equation} \label{DN-Partiel}
  \Lambda_{g,V}(\lambda)_{|<Y_k>} := \Lambda^k_g(\lambda) = \left( \begin{array}{cc} \frac{(n-2)f'(0)}{f^3(0)} - \frac{M_{g,V}(\mu_k)}{f^2(0)} & -\frac{f^{n-2}(1)}{f^n(0) \Delta_{g,V}(\mu_k)} \\ -\frac{f^{n-2}(0)}{f^n(1) \Delta_{g,V}(\mu_k)} & -\frac{(n-2)f'(1)}{f^3(1)} - \frac{N_{g,V}(\mu_k)}{f^2(1)} \end{array} \right) .
\end{equation}

Here the quantities $\Delta_{g,V}(\mu_k)$, $M_{g,V}(\mu_k)$ and $N_{g,V}(\mu_k)$ denote the characteristic and Weyl-Titchmarsh functions of the boundary value problem
\begin{equation} \label{Eq3}
  \left\{ \begin{array}{c} -v'' + [q_{f}(x) +(V-\lambda) f^4(x)] v = - \mu v, \\
  v(0) = 0, \quad v(1) = 0.  \end{array}\right.
\end{equation}
They are defined in the following way. The potential $q_f +(V-\lambda) f^4$ being real and in $L^\infty([0,1])$ or $L^2([0,1])$, we can define for all $\mu \in \C$ two fundamental systems of solutions (FSS) of (\ref{Eq3})
$$
  \{ c_0(x,\mu), s_0(x,\mu)\}, \quad \{ c_1(x,\mu), s_1(x,\mu)\},
$$
by imposing the Cauchy conditions
\begin{equation} \label{FSS}
  \left\{ \begin{array}{cccc} c_0(0,\mu) = 1, & c_0'(0,\mu) = 0, & s_0(0,\mu) = 0, & s_0'(0,\mu) = 1, \\
 	                  c_1(1,\mu) = 1, & c'_1(1,\mu) = 0, & s_1(1,\mu) = 0, & s'_1(1,\mu) = 1. \end{array} \right.
\end{equation}
We recall the following two important properties of the two FSS $\{c_0, s_0\}$ and $\{c_1, s_1\}$.
\begin{enumerate}
\item In terms of the Wronskian $W(u,v) = uv' - u'v$, we have $W(c_0,s_0) = 1, \quad W(c_1,s_1) = 1$.
\item The functions $\mu \to c_j(x,\mu), \, s_j(x,\mu)$ and their derivatives with respect to $x$ are entire functions of order $\frac{1}{2}$ (see \cite{PT}).
\end{enumerate}
The characteristic function of (\ref{Eq3}) is then defined by
\begin{equation} \label{Char}
  \Delta_{g,V}(\mu) = W(s_0, s_1),
\end{equation}
and the Weyl-Titchmarsh functions are defined by
\begin{equation} \label{WT}
  M_{g,V}(\mu) = - \frac{W(c_0, s_1)}{\Delta_{g,V}(\mu)} = -\frac{D_{g,V}(\mu_k)}{\Delta_{g,V}(\mu)}, \quad N_{g,V}(\mu) = - \frac{W(c_1, s_0)}{\Delta_{g,V}(\mu)} = \frac{E_{g,V}(\mu_k)}{\Delta_{g,V}(\mu)}.
\end{equation}
These functions whose relevance in 1D inverse spectral problems is well known (see for instance \cite{Be, Bo1, Bo2, ET, FY, GS, KST, PT}), have the following fundamental properties:

\begin{itemize}
\item The zeros $(\alpha_n)_{n \geq 1}$ of the characteristic function $\Delta_{g,V}$ correspond to minus the Dirichlet spectrum of the selfadjoint Schr\"odinger operator $H = -\frac{d^2}{dx^2} + [q_{f}(x) +(V-\lambda) f^4(x)]$. Moreover, $\Delta_{g,V}$ is completely determined (up to a constant) by the sequence $(\alpha_n)$ through the formula (which is simply a consequence of the Hadamard factorization Theorem):
$$
  \Delta_{g,V}(\mu) = C \prod_{n \geq 1} \left( 1 - \frac{\mu}{\alpha_n} \right).
$$
\item The Weyl-Titchmarsh functions $M_{g,V}(\mu)$ and $N_{g,V}(\mu)$ are meromorphic functions on $\C$ with poles at $(\alpha_n)$. These functions determine uniquely the potential $q_{f}(x) +(V-\lambda) f^4(x)$ through the Borg-Marchenko Theorem which can be stated as follows in our setting : \emph{Assume that $M_{g,V}(\mu) = M_{g,\tilde{V}}(\mu), \ \forall \mu \in \C$. Then $V(x) = \tilde{V}(x), \ \forall x \in [0,1]$. The result is the same if we replace $M_{g,V}(\mu)$ by $N_{g,V}(\mu)$}.
\end{itemize}


Thanks to the expression (\ref{DN-Partiel}) and the previous properties of the characteristic and Weyl-Titchmarsh functions, we can understand more precisely the difference between the amount of information contained in the DN map according to whether we measure the Dirichlet and Neumann data on the same connected component of the boundary, or on distinct connected components.

In the former case which corresponds to the diagonal components of (\ref{DN-Partiel}), the DN map on each harmonic $Y_k$ is simply an operator of multiplication by an expression containing some boundary values of the metric $g$ and its first normal derivative $\partial_\nu g$ as well as the Weyl-Titchmarsh functions $M_{g,V} $ or $N_{g,V}$ evaluated at the $\{\mu_k\}_{k \geq 0}$. Using this information, we can in general prove uniqueness. For instance, we have:

\begin{thm} \label{Uniqueness}
Let $(M,g)$ be a smooth compact connected warped product cylinder as in (\ref{Metric}). Let $V(x), \tilde{V}(x) \in L^\infty(M)$ or $L^2(M)$. Let $\lambda \in \R$ not belong to the Dirichlet spectra of $-\triangle_g + V$ and $-\triangle_g + \tilde{V}$. Assume that
$$
  \Lambda_{g,V, \Gamma_0, \Gamma_0}(\lambda) = \Lambda_{g,\tilde{V}, \Gamma_0, \Gamma_0}(\lambda).
$$
Then $V(x) = \tilde{V}(x)$ for all $x \in [0,1]$.
\end{thm}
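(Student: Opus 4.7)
The plan is to convert the equality of partial DN maps into an equality of one-dimensional Weyl--Titchmarsh functions, first on the sequence $\{\mu_k\}$ of Laplace eigenvalues of $(K,g_K)$, then on all of $\C$ by entire-function theory, and finally to invoke the Borg--Marchenko theorem recalled in the excerpt. First, I would use the diagonalization (\ref{DN-Partiel}): the hypothesis $\Lambda_{g,V,\Gamma_0,\Gamma_0}(\lambda) = \Lambda_{g,\tilde V,\Gamma_0,\Gamma_0}(\lambda)$ asserts that on each eigenspace $\langle Y_k\rangle$ the upper-left entry of the DN matrix agrees for both potentials. Since this entry equals $\frac{(n-2)f'(0)}{f^3(0)}-\frac{M_{g,V}(\mu_k)}{f^2(0)}$ and the $V$-dependence enters only through $M_{g,V}$, this immediately yields
$$ M_{g,V}(\mu_k) = M_{g,\tilde V}(\mu_k), \qquad \forall\, k \geq 0. $$

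Second, I would introduce the auxiliary function
$$ F(\mu) := D_{g,V}(\mu)\,\Delta_{g,\tilde V}(\mu) - D_{g,\tilde V}(\mu)\,\Delta_{g,V}(\mu), $$
which vanishes identically on the sequence $\{\mu_k\}$ by the first step and the representation $M_{g,V} = -D_{g,V}/\Delta_{g,V}$. Property (2) of the fundamental systems $\{c_j,s_j\}$ guarantees that each Wronskian is entire in $\mu$ of order $\tfrac12$, so $F$ is entire of order at most $\tfrac12$. The key step — which I expect to be the main technical point of the proof — is then to invoke Weyl's asymptotic law on the closed $(n-1)$-dimensional manifold $(K,g_K)$: the counting function $N(r) := \#\{k : \mu_k \leq r\}$ satisfies $N(r) \sim c_K\, r^{(n-1)/2}$ as $r \to \infty$. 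For $n \geq 3$ one has $(n-1)/2 \geq 1 > \tfrac12$, so the zeros of $F$ are too dense for a nonzero entire function of order $\tfrac12$, by Jensen's formula (equivalently, by the Hadamard factorization theorem, which forces $F$ to have genus zero and thus a zero-counting function of order at most $\tfrac12$). Hence $F \equiv 0$ on $\C$, and consequently $M_{g,V}(\mu) = M_{g,\tilde V}(\mu)$ as meromorphic functions on $\C$.

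Finally, the Borg--Marchenko theorem recalled in the excerpt, applied to the one-dimensional Schr\"odinger operators with potentials $q_f(x) + (V(x)-\lambda)f^4(x)$ and $q_f(x) + (\tilde V(x)-\lambda)f^4(x)$, gives
$$ q_f(x) + (V(x)-\lambda)f^4(x) = q_f(x) + (\tilde V(x)-\lambda)f^4(x), \qquad \forall\, x \in [0,1]; $$
since $f$ is smooth and strictly positive on $[0,1]$, this forces $V(x) = \tilde V(x)$ for every $x \in [0,1]$, which is the desired conclusion.
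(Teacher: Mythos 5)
Your reduction of the hypothesis to $M_{g,V}(\mu_k)=M_{g,\tilde V}(\mu_k)$ via (\ref{DN-Partiel}), the cross-multiplied entire function $F$, and the final Borg--Marchenko step all coincide with the paper's proof, but the middle step --- forcing $F\equiv 0$ from its zeros at the $\mu_k$ --- has a genuine gap. Jensen's formula (or Hadamard factorization) controls the number of \emph{distinct points} at which $F$ vanishes, counted with the multiplicities of $F$'s own zeros, whereas the Weyl law $N(r)\sim c_K\,r^{(n-1)/2}$ counts eigenvalues of $-\triangle_K$ \emph{with multiplicity}: an eigenvalue of multiplicity $m$ still contributes only one zero of $F$, a priori simple. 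For transversal manifolds with large multiplicities the distinct spectrum can be very sparse; for $K=S^{n-1}$ the distinct eigenvalues are $\ell(\ell+n-2)$, so only about $\sqrt{r}$ of them lie below $r$, which is exactly the zero density that a nonzero entire function of order $\tfrac12$ may have (compare $\cos\sqrt{\mu}$, whose zero-counting function is $\sim \pi^{-1}\sqrt{r}$). Hence for such $K$ your order-versus-density comparison produces no contradiction, while the theorem is stated for an arbitrary closed $K$ (and the subsection even allows $n=2$, where the exponent $(n-1)/2=\tfrac12$ makes the comparison fail regardless of multiplicities).

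This is precisely why the paper does not stop at the density argument: it notes that the zeros ``could be counted many times'' and adds two further inputs. After the substitution $\mu\mapsto\mu^2$, the function $F(\mu^2)$ is entire of order $1$ and, thanks to the asymptotics $\Delta_{g,V}(\mu)\sim \sinh(\sqrt{\mu})/\sqrt{\mu}$, bounded on the imaginary axis, hence it lies in the Nevanlinna class \cite{Lev}; then from the (possibly repeated) zeros $\sqrt{\mu_k}=Ck^{1/(n-1)}+O(1)$ one extracts, by the rescaling trick $H(\nu)=G(N\nu)$, a subsequence of \emph{distinct} real zeros $\nu_k\sim k$ with $\sum 1/\nu_k=+\infty$, and the uniqueness theorem for that class gives $F\equiv 0$ irrespective of eigenvalue multiplicities and in every dimension $n\geq 2$. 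To make your proposal correct you must either add this Nevanlinna/Cartwright-type step (boundedness on $i\R$ plus a Blaschke-type divergence condition on distinct zeros), or restrict to transversal manifolds whose number of distinct eigenvalues below $r$ grows strictly faster than $r^{1/2}$, which the statement does not assume.
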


\begin{proof}
Our assumption means that:
$$
  M_{g,V}(\mu_k) = M_{g,\tilde{V}}(\mu_k), \quad \forall k \geq 1.
$$
Using (\ref{WT}), this implies
$$
  D_{g,V}(\mu_k) \tilde{\Delta}_{g,V}(\mu_k) - \tilde{D}_{g,V}(\mu_k) \Delta_{g,V}(\mu_k) = 0, \quad \forall k \geq 1.
$$
Introduce the function $F(\mu) = D_{g,V}(\mu^2) \tilde{\Delta}_{g,V}(\mu^2) - \tilde{D}_{g,V}(\mu^2) \Delta_{g,V}(\mu^2)$. From the analytic properties of the FSS $\{c_0, s_0\}$ and $\{c_1, s_1\}$, we see that $F$ is an entire function of order $1$ that vanishes on the sequence $(\sqrt{\mu_k})$. Moreover $F$ is bounded on the imaginary axis $i\R$. It follows that $F$ belongs to the Nevanlina class \cite{Lev}). Let us show that $F$ must vanish identically on $\C$. First, the Weyl law implies the following asymptotics on the $\sqrt{\mu_k}$ (repeated according multiplicity):
$$
  \sqrt{\mu_k} =  C k^{\frac{1}{(n-1)}} +O(1),
$$
where $C$ denotes a suitable constant independent of $k$. Next, setting  $G(\mu) = F(C \mu)$, we see that $G$ vanishes on the sub-sequence $\lambda_k := 
\frac{1}{C} \sqrt{\mu_{k^{n-1}}}$, which satisfies $\lambda_k = k+O(1)$. But, since the $\lambda_k$ could be counted many times, we still need to have recourse to one further trick in order to conclude. Namely, for $N \in \N$, we introduce a new function $H(\nu)= G(N\nu)$ which, just like  $F(\nu)$, is entire of order $1$ and bounded on the imaginary axis, and which vanishes on $\nu_k = \frac{1}{N} \lambda_{Nk}$. It follows from the previous discussion that $\nu_k \sim k$ and are distinct if $N$ is large enough. Since $\sum \frac{1}{\nu_k} = +\infty$, we conclude thus that $H(\mu) = 0$, (and then $F(\mu)=0$), for all $\mu \in \C$. From the definition of $F$, this result can be translated into the equality between the Weyl-Titchmarsh functions $M_{g,V}(\mu) = M_{g,\tilde{V}}(\mu)$ for all $\mu \in \C$. Applying the Borg-Marchenko Theorem, we finally get $V(x) = \tilde{V}(x)$ for all $x \in [0,1]$.
\end{proof}


\begin{rem}
It is an open problem to prove uniqueness of the potential $V$ from $\Lambda_{g,V,\Gamma_D,\Gamma_N}(\lambda)$ if we measure for instance the Dirichlet and Neumann data $\Gamma_D = \Gamma_N = \Gamma$ on an open subset strictly contained in a connected component of $\partial M$, \textit{i.e}. $\Gamma \subsetneq \Gamma_0$ or $\Gamma \subsetneq \Gamma_1$. For some uniqueness results in that direction in the particular case of rotationally invariant toric cylinders, we refer to \cite{DKN2}.
\end{rem}

Let us come back now to non-uniqueness results. They appear in the case where the Dirichlet and Neumann data are measured on distinct connected components of the boundary. In this case, which corresponds to measuring the anti-diagonal components of (\ref{DN-Partiel}), the situation becomes much more interesting since the Weyl-Titchmarsh functions are replaced by the characteristic functions $\Delta_{g,V}$. As a consequence, the above argument cannot work since there exist no equivalent result of the Borg-Marchenko Theorem from the characteristic function. On the contrary, it is well known that the characteristic function is not enough to determine uniquely a potential. We can make precise this assertion by stating the following key result whose proof almost readily follows from the properties on the characteristic function mentioned above and can be found in \cite{DKN3}:

\begin{lemma} \label{Link-Iso}
  Let $g$ be a fixed metric as in (\ref{Metric}) and $V = V(x), \tilde{V} = \tilde{V}(x) \in L^2(M)$. Then
$$
	\Delta_{g,V}(\mu) = \Delta_{g,\tilde{V}}(\mu), \quad \forall \mu \in \C,
$$
if and only if
$$
  q_f + (V-\lambda)f^4 \ \textrm{and} \ q_f + (\tilde{V}-\lambda)f^4 \ \textrm{are isospectral for} \ (\ref{Eq3}).
$$	
\end{lemma}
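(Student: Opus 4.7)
The forward implication is immediate from the first fundamental property of $\Delta_{g,V}$ quoted in the bullets above: if $\Delta_{g,V}(\mu) = \Delta_{g,\tilde V}(\mu)$ for every $\mu \in \C$, then the two entire functions share the same zero set $(\alpha_n)_{n \geq 1}$, which is precisely minus the Dirichlet spectrum of $-\partial_x^2 + [q_f + (V-\lambda)f^4]$ (respectively, with $\tilde V$), so the two one-dimensional Schr\"odinger operators are isospectral for (\ref{Eq3}).

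For the converse, assume that the two operators are isospectral, so that the zero sequences of $\Delta_{g,V}$ and $\Delta_{g,\tilde V}$ coincide with multiplicities. By the Hadamard-type factorization already recorded in the bullets, we may write
\begin{equation*}
  \Delta_{g,V}(\mu) = C_1 \prod_{n \geq 1}\Bigl(1 - \tfrac{\mu}{\alpha_n}\Bigr), \qquad \Delta_{g,\tilde V}(\mu) = C_2 \prod_{n \geq 1}\Bigl(1 - \tfrac{\mu}{\alpha_n}\Bigr),
\end{equation*}
for two nonzero constants $C_1, C_2$, and the proof reduces to showing that $C_1 = C_2$.

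To match the constants I would use the identity $\Delta_{g,V}(\mu) = s_0(1,\mu)$ obtained by evaluating the constant-in-$x$ Wronskian $W(s_0,s_1)$ at $x = 1$ via the Cauchy data $s_1(1) = 0$, $s_1'(1) = 1$. The classical large-$\mu$ asymptotic for the Cauchy solution of $-v'' + Pv = -\mu v$ with $v(0) = 0$, $v'(0) = 1$ (cf. \cite{PT}) reads
\begin{equation*}
  s_0(1,\mu) = \frac{\sinh\sqrt{\mu}}{\sqrt{\mu}}\,\bigl(1 + o(1)\bigr), \qquad \mu \to +\infty,
\end{equation*}
with a leading term that does not depend on the potential $P \in L^\infty$ (or $L^2$). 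Applying this both to $P = q_f + (V-\lambda)f^4$ and to $\tilde P = q_f + (\tilde V - \lambda)f^4$ shows that $\Delta_{g,V}(\mu)/\Delta_{g,\tilde V}(\mu) \to 1$ as $\mu \to +\infty$ along the real axis; but this ratio equals $C_1/C_2$ identically, hence $C_1 = C_2$ and $\Delta_{g,V} \equiv \Delta_{g,\tilde V}$.

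The only genuinely non-formal step is this last normalization. Evaluating both characteristic functions at a fixed finite value of $\mu$ cannot work, since any such value depends delicately on the potential; the asymptotic approach works precisely because the leading semiclassical behavior of $s_0(1,\mu)$ as $|\mu| \to \infty$ is dictated by the $\mu$-term in the ODE and is insensitive to the bounded (or $L^2$) piece of the potential.
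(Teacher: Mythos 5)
Your argument is correct and is essentially the paper's own route: zeros of $\Delta_{g,V}$ as minus the Dirichlet spectrum give the forward direction, Hadamard factorization reduces the converse to matching the constants, and the potential-independent asymptotics $\Delta_{g,V}(\mu)\sim \sinh(\sqrt{\mu})/\sqrt{\mu}$ (exactly the ``universal asymptotics'' the paper invokes in the proof of Lemma \ref{New1}, with $\Delta_{g,V}(\mu)=s_0(1,\mu)$) fixes the constant. Nothing essential is missing.
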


In the case of a potential $Q = q_f + (V-\lambda)f^4$ that belongs to $L^2([0,1])$, we have a complete characterization of the class of isospectral potentials to $Q$ for the Schr\"odinger operator with Dirichlet boundary conditions (\ref{Eq3}). This is due to the fundamental work of P\"oschel and Trubowitz~\cite{PT}. In particular, given such a potential $Q$, the family of isospectral potentials is parametrized by sequences $\xi \in l^2_1$ where $l^2_1 = \{(u_k)_{k \geq 0} / \sum_{k=0}^\infty (1+k) |u_k|^2 < \infty \}$ and this family can be written as
$$
  Q_{\xi} = Q + v_\xi,
$$
where $v_\xi$ is given explicitely in \cite{PT}, Theorem 5.2. Using the definition of $Q$ and Lemma \ref{Link-Iso}, we see that given a potential $V \in L^2([0,1])$, there exists thus a family of potentials $V_\xi = V + \frac{v_\xi}{f^4}$ still parametrized by sequences $\xi \in l^2_1$ such that
$$
  \Delta_{g,V}(\mu) = \Delta_{g,V_\xi}(\mu), \quad \forall \mu \in \C.
$$
As a consequence of (\ref{DN-Partiel}), we obtain therefore the non-uniqueness results claimed in the case where the Dirichlet and Neumann data are measured on distinct connected components of $\partial M$. More precisely, we have
\begin{thm} \label{NonUniquenessQ3}
Let $g$ be a fixed metric as in (\ref{Metric}) and $V = V(x) \in L^2(M)$. Let $\lambda \in \R$ not belong to the Dirichlet spectra of $-\triangle_g + V$. Let $\Gamma_D, \Gamma_N$ be open subsets in distinct connected components of $\partial M$. Then for all $\xi \in l^2_1$, we have
\begin{equation} \label{t1}
  \Lambda_{g,V,\Gamma_D, \Gamma_N}(\lambda) = \Lambda_{g,V_\xi,\Gamma_D, \Gamma_N}(\lambda),
\end{equation}
where $V_\xi = \frac{v_\xi}{f^4}$ and $v_\xi$ is given in \cite{PT}, Thm 5.2. Moreover, in the case $\Gamma_D = \Gamma_0$ and $\Gamma_N = \Gamma_1$ (or the converse), the class of potentials $V_\xi$ contains all the possible potentials satisfying the property (\ref{t1}).
\end{thm}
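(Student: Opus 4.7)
The strategy exploits the simultaneous diagonalisation (\ref{DN-Partiel}): in the harmonic basis $(Y_k)_{k\geq 0}$ of $(K,g_K)$, the DN map is a $2\times 2$ matrix whose antidiagonal entries depend on the potential only through the characteristic function $\Delta_{g,V}$ evaluated at the transversal eigenvalues $\mu_k$. Assuming without loss of generality that $\Gamma_D \subset \Gamma_0$ and $\Gamma_N \subset \Gamma_1$ (the opposite case being symmetric), the partial DN map $\Lambda_{g,V,\Gamma_D,\Gamma_N}(\lambda)$ is obtained from the off-diagonal block $\Lambda_{g,V,\Gamma_0,\Gamma_1}(\lambda)$ by pre- and post-composition with restriction operators. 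Consequently everything reduces to comparing the scalars $\Delta_{g,V}(\mu_k)$.

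\textbf{Forward direction (\ref{t1}).} Set $Q = q_f + (V-\lambda)f^4 \in L^2([0,1])$. The theorem of P\"oschel-Trubowitz \cite{PT} describes the Dirichlet-isospectral class of $Q$ for the problem (\ref{Eq3}) as the family $\{Q_\xi = Q + v_\xi : \xi \in \ell^2_1\}$, with $v_\xi$ given explicitly. Define $V_\xi = V + v_\xi/f^4 \in L^2(M)$; then $q_f + (V_\xi-\lambda)f^4 = Q_\xi$, whence by Lemma \ref{Link-Iso},
$$
\Delta_{g,V}(\mu) = \Delta_{g,V_\xi}(\mu), \qquad \forall \mu \in \C.
$$
Substituting this into (\ref{DN-Partiel}) mode by mode gives $\Lambda_{g,V,\Gamma_0,\Gamma_1}(\lambda) = \Lambda_{g,V_\xi,\Gamma_0,\Gamma_1}(\lambda)$, and restricting to $\Gamma_D$ and $\Gamma_N$ yields (\ref{t1}).

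\textbf{Converse when $\Gamma_D = \Gamma_0$, $\Gamma_N = \Gamma_1$.} Here the partial DN map coincides with the full off-diagonal block, which is diagonal in $(Y_k)$. Thus, if some $\tilde V = \tilde V(x) \in L^2(M)$ satisfies (\ref{t1}), one obtains $\Delta_{g,V}(\mu_k) = \Delta_{g,\tilde V}(\mu_k)$ for every $k \geq 0$. The crucial step is to upgrade this discrete identity to an identity of entire functions $\Delta_{g,V} \equiv \Delta_{g,\tilde V}$ on $\C$. For this I would reproduce the Nevanlinna-class argument used in the proof of Theorem \ref{Uniqueness}: the function $F(\mu) = \Delta_{g,V}(\mu^2) - \Delta_{g,\tilde V}(\mu^2)$ is entire of order $1$, bounded on $i\R$, and vanishes at each $\sqrt{\mu_k}$; Weyl's law on the $(n-1)$-dimensional closed manifold $(K,g_K)$ gives $\sqrt{\mu_k} \sim C k^{1/(n-1)}$; the two-stage rescaling $G(\mu) = F(C\mu)$, $H(\nu) = G(N\nu)$ (with $N$ sufficiently large) of Theorem \ref{Uniqueness} produces distinct zeros $\nu_k \sim k$ satisfying $\sum 1/\nu_k = +\infty$, forcing $H \equiv 0$ and hence $F \equiv 0$. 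By Lemma \ref{Link-Iso}, $Q$ and $\tilde Q = q_f + (\tilde V - \lambda)f^4$ are Dirichlet-isospectral, and the P\"oschel-Trubowitz parametrization identifies $\tilde Q = Q + v_\xi$ for some $\xi \in \ell^2_1$, i.e. $\tilde V = V_\xi$.

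\textbf{Expected difficulty.} The forward direction is essentially a packaging of P\"oschel-Trubowitz through Lemma \ref{Link-Iso} and (\ref{DN-Partiel}). The genuine obstacle is the converse: one must extract a global analytic identity from data sampled only at the (potentially sparse) Dirichlet spectrum of the transversal Laplacian $-\triangle_K$. Unlike the situation of Theorem \ref{Uniqueness}, where Borg-Marchenko closes the argument as soon as the Weyl-Titchmarsh function is determined, no analogous uniqueness theorem is available for the characteristic function alone, so the entire burden is carried by the Carleman/Nevanlinna estimate together with the subsequence/rescaling trick that compensates for the slow growth and possible multiplicities of the $\sqrt{\mu_k}$ when $n$ is large.
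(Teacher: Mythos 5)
Your proposal is correct and follows essentially the same route as the paper: the forward direction packages the P\"oschel--Trubowitz isospectral family through Lemma \ref{Link-Iso} and the block representation (\ref{DN-Partiel}), and the converse upgrades the equalities $\Delta_{g,V}(\mu_k)=\Delta_{g,\tilde V}(\mu_k)$ to an identity of entire functions by exactly the Nevanlinna-class/rescaling argument of Theorem \ref{Uniqueness}, then concludes via Lemma \ref{Link-Iso} and the isospectral parametrization. Your writing $V_\xi = V + v_\xi/f^4$ matches the paper's intended definition (the formula in the theorem statement omits the $V$), so there is no discrepancy.
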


\begin{proof}
The first part of the Theorem has been proved above. Assume now that $\Gamma_D = \Gamma_0$ and $\Gamma_N = \Gamma_1$ and that
$$
\Lambda_{g,V,\Gamma_D, \Gamma_N}(\lambda) = \Lambda_{g, \tilde{V},\Gamma_D, \Gamma_N}(\lambda). 	
$$
Thanks to  (\ref{DN-Partiel}), this is equivalent to the equalities
$$
\Delta_{g,V}(\mu_k) = \Delta_{g,\tilde{V}}(\mu_k), \quad \forall k \geq 1.
$$
Let us introduce the function $F(\mu) := \Delta_{g,V}(\mu) - \Delta_{g,\tilde{V}}(\mu)$. Then, the function $F$ can be shown to be an entire function of order $1/2$ that vanishes on the sequence $(\mu_k)_{k \geq 1}$. By the same argument as in the proof of Theorem \ref{Uniqueness}, we get $F(\mu) = 0$ for all $\mu \in \C$, \textit{i.e.}
$$
  \Delta_{g,V}(\mu) = \Delta_{g,\tilde{V}}(\mu), \quad \forall \mu \in \C.
$$
We conclude from Lemma \ref{Link-Iso} that the potential $\tilde{V}$ is isospectral to $V$. In consequence, there exists a sequence $\xi \in l^2_1$ such that $\tilde{V} = V_\xi$.
\end{proof}

\begin{rem}
1. Note that, in contrast to what is required for the counterexamples coming from the gauge invariance in Section \ref{1}, we do not assume in this result that $\overline{\Gamma_D \cup \Gamma_N} \ne \partial M$. En fact, we could have $\Gamma_D = \Gamma_0$ and $\Gamma_N = \Gamma_1$ and thus $\Gamma_D \cup \Gamma_N = \partial M$ without altering the non-uniqueness results. \\
2. We could also find counterexamples to uniqueness in the class of potentials in $L^\infty([0,1])$ but we do not have a complete characterization of such potentials as we do in $L^2([0,1])$. To handle the case of $L^\infty([0,1])$ potentials, we use the precise description of isospectral potentials in $L^2$ obtained in \cite{PT}. For instance, P\"oschel and Trubowitz showed that, given $Q = q_f + (V-\lambda) f^4 \in L^2([0,1])$and for each eigenfunction $\phi_k, \ k \geq 1$ of (\ref{Eq3}), we can find a one parameter family of explicit potentials isospectral to $Q$ in $L^2([0,1])$ by the formula
\begin{equation} \label{Iso1}
  Q_{k,t}(x) = Q(x) - 2 \frac{d^2}{dx^2} \log \theta_{k,t}(x), \quad \quad \forall t \in \R,
\end{equation} 	
where
\begin{equation} \label{Iso2}
  \theta_{k,t}(x) = 1 + (e^t - 1) \int_x^1 \phi_k^2(s) ds.
\end{equation} 		
Using the definition of $Q$, we get thus the explicit one parameter families of potentials $V_{k,t}$ isospectral to $V$:
\begin{equation} \label{IsoPot}
  V_{k,t}(x) = V(x) - \frac{2}{f^4(x)} \frac{d^2}{dx^2} \log \theta_{k,t}(x), \quad \forall k \geq 1, \quad \forall t \in \R,
\end{equation}
where $\theta_{k,t}$ is given by (\ref{Iso2}). Now it is clear from the explicit form of $V_{k,t}$ that if $V \in L^\infty([0,1])$, then $V_{k,t}$ is also in $L^\infty([0,1])$. In consequence, we have found a whole family of potentials $(V_{k,t})_{k \geq 1, t \in \R} \in L^\infty([0,1])$ such that
$$
  \Lambda_{g,V,\Gamma_D, \Gamma_N}(\lambda) = \Lambda_{g,V_{k,t},\Gamma_D, \Gamma_N}(\lambda).
$$
\end{rem}

%
%
%

\subsection{Counterexamples in the conformal class of a cylindrical warped product in dimension $n\geq 3$} \label{3}

In this Section, we construct counterexamples to uniqueness for the anisotropic Calder\'on problem \textbf{(Q2)} in dimension $n \geq 3$ modulo the gauge invariance introduced in Section \ref{1}, Definition \ref{Gauge0}. We would like first to stress the fact that it is difficult to use directly in this setting the construction of counterexamples to uniqueness for the problem \textbf{(Q3)} given in section \ref{2}. To understand why, consider two cylindrical warped product Riemannian manifolds $(M,g)$ and $(M,\tilde{g})$ with metrics $g$ and $\tilde{g}$ as in (\ref{Metric}) and consider Dirichlet and Neumann data satisfying for instance $\Gamma_D = \Gamma_0$ and $\Gamma_N = \Gamma_1$. Following the procedure given in Section \ref{2}, we would like to construct the warping functions $f$ and $\tilde{f}$ in the definition of $g$ and $\tilde{g}$ in such a way that $\Lambda_{g,\Gamma_0,\Gamma_1}(\lambda) = \Lambda_{\tilde{g},\Gamma_0,\Gamma_1}(\lambda)$. Doing so, similar arguments as in Section \ref{2} would lead to the following chain of equivalences.

\begin{lemma} \label{New1}
(1) $\Lambda_{g,\Gamma_0,\Gamma_1}(\lambda) = \Lambda_{\tilde{g},\Gamma_0,\Gamma_1}(\lambda)$ \\
iff (2) $\frac{f^{n-2}(0)}{f^n(1) \Delta_{g}(\mu_k)} = \frac{\tilde{f}^{n-2}(0)}{\tilde{f}^n(1) \Delta_{\tilde{g}}(\mu_k)}$ for all $k \geq 1$, \\
iff (3) $\Delta_{g}(\mu) = \Delta_{\tilde{g}}(\mu)$ for all $\mu \in \C$ and $\frac{f^{n-2}(0)}{f^n(1)} = \frac{\tilde{f}^{n-2}(0)}{\tilde{f}^n(1)}$, \\
iff (4) $q_f -\lambda f^4$ and $q_{\tilde{f}} - \lambda \tilde{f}^4$ are isospectral for (\ref{Eq3}) and $\frac{f^{n-2}(0)}{f^n(1)} = \frac{\tilde{f}^{n-2}(0)}{\tilde{f}^n(1)}$.
\end{lemma}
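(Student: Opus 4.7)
The plan is to establish the chain (1) $\Leftrightarrow$ (2) $\Leftrightarrow$ (3) $\Leftrightarrow$ (4); the outer two equivalences are essentially immediate consequences of results already in the excerpt, while the middle one requires an entire-function argument in the style of the proof of Theorem \ref{Uniqueness}.

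For (1) $\Leftrightarrow$ (2) I would appeal directly to the diagonalization \eqref{DN-Partiel}, specialized to $V = 0$. Since the partial DN map with $\Gamma_D = \Gamma_0$ and $\Gamma_N = \Gamma_1$ corresponds to extracting the off-diagonal $(2,1)$-block of the matrix \eqref{h1}, and since on each one-dimensional subspace $\langle Y_k\rangle$ this block acts as multiplication by $-\frac{f^{n-2}(0)}{f^n(1)\Delta_g(\mu_k)}$, the two partial DN maps coincide iff these scalars agree for every $k\geq 1$. The equivalence (3) $\Leftrightarrow$ (4) is an immediate application of Lemma \ref{Link-Iso} (with zero Schr\"odinger potential), the boundary-value condition $\frac{f^{n-2}(0)}{f^n(1)} = \frac{\tilde f^{n-2}(0)}{\tilde f^n(1)}$ simply being carried through unchanged on both sides.

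The substantive work lies in (2) $\Rightarrow$ (3). Setting $A := \frac{f^{n-2}(0)}{f^n(1)}$ and $\tilde A := \frac{\tilde f^{n-2}(0)}{\tilde f^n(1)}$, condition (2) amounts to $A\,\Delta_{\tilde g}(\mu_k) = \tilde A\,\Delta_g(\mu_k)$ for all $k\geq 1$. I would then form the auxiliary function
\[
F(\nu) \;:=\; A\,\Delta_{\tilde g}(\nu^2)\;-\;\tilde A\,\Delta_g(\nu^2),
\]
which, the characteristic functions being entire of order $1/2$, is entire of order $1$ and bounded on the imaginary axis in $\nu$, and so lies in the Nevanlinna class. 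It vanishes at each $\sqrt{\mu_k}$, and the Weyl law for the closed $(n-1)$-dimensional manifold $(K,g_K)$ gives $\sqrt{\mu_k} \sim C k^{1/(n-1)}$. The subsequence-and-rescaling trick used in Theorem \ref{Uniqueness} (pass to $k = m^{n-1}$, then rescale $\nu \mapsto N\nu$ with $N$ large enough to separate multiplicities) produces a sequence of distinct zeros with divergent reciprocal sum, forcing $F \equiv 0$. Hence $A\,\Delta_{\tilde g}(\mu) = \tilde A\,\Delta_g(\mu)$ as an identity of entire functions on $\C$.

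It remains to split this identity into $A = \tilde A$ together with $\Delta_g \equiv \Delta_{\tilde g}$. For this I would invoke the standard high-energy asymptotic for the Dirichlet characteristic function of a Sturm-Liouville problem on $[0,1]$ with $L^\infty$ (or $L^2$) potential, namely $\Delta_g(-\xi^2) = \frac{\sinh\xi}{\xi}\bigl(1+o(1)\bigr)$ as $\xi\to+\infty$, and similarly for $\Delta_{\tilde g}$, with leading term independent of the specific potentials $q_f - \lambda f^4$ and $q_{\tilde f}-\lambda\tilde f^4$ appearing in \eqref{Eq3}. Letting $\xi\to+\infty$ in the identity then forces $A = \tilde A$, whence $\Delta_g \equiv \Delta_{\tilde g}$, which is exactly (3). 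I expect the main delicate point to be the clean verification that this leading asymptotic is genuinely uniform across the admissible potentials in play; once that is established, everything else is a careful but routine application of the entire-function methods already exercised in the paper.
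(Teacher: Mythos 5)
Your proposal is correct and follows essentially the same route as the paper's proof: (1)$\Leftrightarrow$(2) from the diagonalization (\ref{DN-Partiel}) with $V=0$, (2)$\Leftrightarrow$(3) by the entire-function/Nevanlinna argument (the paper's ``Complex Angular Momentum method'') together with the universal asymptotics $\Delta_g(\mu),\Delta_{\tilde g}(\mu)\sim \frac{\sinh\sqrt{\mu}}{\sqrt{\mu}}$, and (3)$\Leftrightarrow$(4) via the proof of Lemma \ref{Link-Iso}. The uniformity you worry about at the end is not needed, since the high-energy asymptotics are only invoked for the two fixed potentials $q_f-\lambda f^4$ and $q_{\tilde f}-\lambda\tilde f^4$ at hand.
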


\begin{proof}
  (1) iff (2) follows from (\ref{DN-Partiel}). (2) iff (3) follows from the Complex Angular Momentum method and the universal asymptotics $ \Delta_{g}(\mu), \, \Delta_{\tilde{g}}(\mu) \sim \frac{\sinh(\sqrt{\mu})}{\sqrt{\mu}}, \quad \mu \to \infty$ (see \cite{DKN3}). (3) iff (4) follows from the proof of Lemma \ref{Link-Iso} given in \cite{DKN3}.
\end{proof}

This Lemma shows us that to construct counterexamples to uniqueness in this setting, it is enough to construct once again potentials which are isospectral to a given one $Q := q_f -\lambda f^4 \in C^\infty([0,1])$. Imagine we have found $Q_\xi = Q + v_\xi \in C^\infty([0,1])$ isospectral to $Q$ for some $\xi \in l^2_1$. It remains now to prove that there exists an $f_\xi \in C^\infty([0,1])$ such that
$$
  q_{f_\xi} - \lambda f_\xi^4 = Q_{\xi}, \quad \textrm{and} \quad f_\xi^{n-2}(0) = \left( \frac{f^{n-2}(0)}{f^n(1)} \right) f_\xi^n(1).
$$
Recalling the definition of $q_f$, this amounts to solving the nonlinear ODE with boundary conditions:
$$
  (f_\xi^{n-2})'' -Q_\xi f_\xi^{n-2} - \lambda f_\xi^{n+2} = 0, \quad \textrm{and} \quad f_\xi^{n-2}(0) = \left( \frac{f^{n-2}(0)}{f^n(1)} \right) f_\xi^n(1).
$$
Even in the case $\lambda = 0$, in which the above ODE becomes linear, the boundary conditions make it difficult to find a smooth solution on $[0,1]$. This means that in the general case, we cannot find metrics $g$ and $\tilde{g}$ of the form (\ref{Metric}) such that the condition (1) of Lemma \ref{New1} holds. The problem comes from the fact that given a metric $g$ of the form (\ref{Metric}), we are looking for counterexamples to uniqueness for \textbf{(Q2)} in the too restrictive class of metrics $\tilde{g}$ which are still of the form (\ref{Metric}). In \cite{DKN3} and below, we look instead for counterexamples to uniqueness in the full conformal class of a given metric $g$ of the form (\ref{Metric}). Precisely, we will now show that the counterexamples to uniqueness given in Theorem \ref{NonUniquenessQ3} for the anisotropic Calder\'on problem \textbf{(Q3)} lead to non trivial counterexamples to uniqueness for the anisotropic Calder\'on problem \textbf{(Q2)} in dimension $n\geq 3$ modulo the gauge invariance. To do this, we use Proposition \ref{Link-c-to-V} which gives a clear link between the anisotropic Calder\'on problems \textbf{(Q2)} and \textbf{(Q3)} when $\Gamma_D \cap \Gamma_N = \emptyset$.

Thus we work with a Riemannian manifold  $(M,g)$ of the type of a cylindrical warped product (\ref{Metric}), a smooth potential $V=V(x) \in C^\infty(M)$ and choose $\lambda \in \R$ in the complement of the Dirichlet spectrum of $-\triangle_g + V$. Given a smooth potential $\tilde{V}$ given by (\ref{IsoPot}) (it is always possible to find a large class of such \emph{smooth} potentials using Remark 3.3 in \cite{DKN3}), our goal is to show that there exist conformal factors $c$ and $\tilde{c}$ such that (see (\ref{Vgc}) for the notations)
\begin{equation} \label{fp}
  V_{g,c,\lambda} = V, \quad V_{g,\tilde{c},\lambda} = \tilde{V},
\end{equation}
and
$$
  c, \tilde{c} = 1 \ \textrm{on} \ \Gamma_D \cup \Gamma_N.
$$
If such conformal factors $c$ and $\tilde{c}$ exist, it would follow then from Theorem \ref{NonUniquenessQ3} and Proposition \ref{Link-c-to-V} that
$$
  \Lambda_{c^4 g, \Gamma_D, \Gamma_N}(\lambda) = \Lambda_{\tilde{c}^4 g, \Gamma_D, \Gamma_N}(\lambda)
$$
whenever $\Gamma_D \cap \Gamma_N = \emptyset$. Furthermore, the metrics $c^4 g$ and $\tilde{c}^4 g$ will not be gauge related according to Definition \ref{Gauge0} since they correspond to different potentials $V$  and $\tilde{V}$, as shown in Lemma \ref{lemmafactor} and the ensuing paragraph.

Next, it is easy to see using (\ref{Vgc}) that finding $c$ and $\tilde{c}$ satisfying (\ref{fp}) is equivalent to finding a smooth positive solution $w$ of the nonlinear Dirichlet problem
\begin{equation} \label{DirichletPb-q}
  \left\{ \begin{array}{rl} \triangle_g w + (\lambda - V)w - \lambda w^{\frac{n+2}{n-2}} & = 0, \ \textrm{on} \ M, \\
	                          w & = \eta, \ \textrm{on} \ \partial M,
	\end{array} \right. 														
\end{equation}
where $\eta = 1$ on $\Gamma_D \cup \Gamma_N$. Then it will follow from (\ref{Vgc}) that there will exist a $c > 0$ satisfying $V_{g,c,\lambda} = V$, $c = 1$ on $\Gamma_D \cup \Gamma_N$. As it turns out, it is again possible to find smooth positive solutions $w$ of (\ref{DirichletPb-q}) using the technique of lower and upper solutions. More precisely, the following is proved in \cite{DKN3}:

\begin{prop}[Zero frequency] \label{q-to-c-0}
  Assume that $\lambda = 0$ and $V \geq 0$ on $M$. Then for each positive smooth function $\eta$ on $\partial M$ such that $\eta = 1$ on $\Gamma_D \cup \Gamma_N$, there exists a unique smooth positive solution $w$ of (\ref{DirichletPb-q}) such that $0 <  w \leq \max \eta$ on $M$.
\end{prop}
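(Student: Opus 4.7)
\noindent\emph{Proof proposal.} My plan is to exploit the fact that setting $\lambda=0$ in (\ref{DirichletPb-q}) eliminates the supercritical nonlinearity $\lambda w^{(n+2)/(n-2)}$, so that the problem collapses to the \emph{linear} Dirichlet BVP
\[
  -\Delta_g w + V w = 0 \ \text{on}\ M, \qquad w_{|\partial M} = \eta.
\]
Consequently I will not really need the iterative sub/supersolution scheme described around Proposition \ref{NonlinearDirichletPb}: classical linear elliptic theory together with the maximum principle will do everything, and the role of the hypothesis $V\geq 0$ will be purely to ensure coercivity of $-\Delta_g+V$ and to validate a strong maximum principle at the last step.

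First I would establish existence, uniqueness, and smoothness. Since $V\geq 0$, the bilinear form $a(u,v) := \int_M\bigl(\langle du,dv\rangle_g + Vuv\bigr)\, dVol_g$ is coercive on $H^1_0(M)$, so $0$ is not a Dirichlet eigenvalue of $-\Delta_g + V$. Writing $w = \tilde\eta + u$ with $\tilde\eta$ any smooth extension of $\eta$ and $u \in H^1_0(M)$, Lax--Milgram produces a unique $u$ solving the inhomogeneous variational formulation, and elliptic regularity up to the boundary (with smooth $g$, $V$, $\eta$) upgrades $w$ to an element of $C^\infty(\overline M)$. The upper bound $w \leq \max_{\partial M}\eta$ is then immediate from the weak maximum principle: the constant $\overline w := \max_{\partial M}\eta$ is a supersolution since $-\Delta_g \overline w + V \overline w = V\overline w\geq 0$ with $\overline w \geq \eta$ on $\partial M$, and comparing $\overline w - w$ gives the bound on $M$.

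The only point requiring more than a one-line appeal is the strict positivity $w > 0$ on $M$, which is what ties the argument back to the sub/supersolution philosophy. I would first show $w\geq 0$ by testing the equation against the negative part $w^- := \max(-w,0) \in H^1_0(M)$ and integrating by parts to obtain
\[
  \int_M\bigl(|dw^-|_g^2 + V(w^-)^2\bigr)\, dVol_g = 0,
\]
which, since both summands are non-negative, forces $w^-\equiv 0$. Strict positivity in the interior then follows from the strong maximum principle applied to $(-\Delta_g + V)w = 0$ with $w\geq 0$, using that the boundary trace $\eta$ is strictly positive. In short, the ``main obstacle'' is more apparent than real: no nonlinear difficulty survives the specialization to $\lambda=0$, and both coercivity and the strong maximum principle are powered by the single sign condition $V\geq 0$, which is exactly what is assumed.
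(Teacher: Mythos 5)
Your proof is correct, and it takes a more direct route than the one the paper points to. The survey (following \cite{DKN3}) presents Proposition \ref{q-to-c-0} as an instance of the lower/upper solution technique for the nonlinear problem (\ref{DirichletPb-q}): at $\lambda=0$ one takes $\underline{w}=0$ and $\overline{w}=\max\eta$ as sub- and supersolutions, runs the monotone iteration to get a smooth solution with $0\leq w\leq\max\eta$, and then invokes the strong maximum principle for strict positivity; the virtue of that framing is its uniformity with the nonzero-frequency cases of Proposition \ref{q-to-c-1}, where the term $\lambda w^{\frac{n+2}{n-2}}$ genuinely matters. You instead observe that the nonlinearity disappears entirely when $\lambda=0$, so the problem is the linear Dirichlet problem $-\Delta_g w+Vw=0$, $w_{|\partial M}=\eta$, and Lax--Milgram (coercivity from $V\geq 0$ plus Poincar\'e), boundary elliptic regularity, and the weak/strong maximum principles give existence, smoothness, the bound $w\leq\max\eta$, and positivity. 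This buys you in particular an immediate and clean proof of the uniqueness assertion, which in the sub/supersolution framework still ultimately rests on noticing the linearity; your barriers $0$ and $\max\eta$ are of course exactly the paper's sub- and supersolutions, so the two arguments are close in spirit. One small point to tighten: in the $w^-$ test, the identity $\int_M\bigl(|dw^-|_g^2+V(w^-)^2\bigr)\,dVol_g=0$ does not kill $w^-$ through the potential term alone (since $V$ may vanish on part of $M$); rather it gives $dw^-=0$, hence $w^-$ is constant on the connected manifold $M$, and the zero trace (using $\eta>0$) then forces $w^-\equiv 0$. Alternatively, the same weak maximum principle you use for the upper bound, applied with the comparison function $0$, already yields $w\geq 0$ without the separate test-function computation.
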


\begin{prop}[Nonzero frequency] \label{q-to-c-1}
  1. If $\lambda >0$  and  $0 <V(x) <\lambda$ on $M$, then for each positive function $\eta$ on $\partial M$ such that $\max \eta \geq 1$ on $\partial M$, there exists a smooth positive solution $w$ of (\ref{DirichletPb-q}). \\
	2. If $\lambda < 0$ and $V(x) \geq 0$ on $M$, then for each for each positive function $\eta$ on $\partial M$ such that $\eta \leq 1$ on $\partial M$, there exists a smooth positive solution $w$ of (\ref{DirichletPb-q}). \\

	\end{prop}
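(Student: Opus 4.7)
The plan is to invoke the sub- and super-solution machinery recalled after (\ref{GeneralDP}), applied to (\ref{DirichletPb-q}) written in the form $\Delta_g w + F(x,w) = 0$ with
$$ F(x,w) = (\lambda - V(x))\,w - \lambda\,w^{\frac{n+2}{n-2}}. $$
In each case one exhibits a pair of constant barriers $\underline{w} \leq \overline{w}$ satisfying $\underline{w} \leq \eta \leq \overline{w}$ on $\partial M$; the monotone iteration described after (\ref{sequence}) then produces a smooth $w \in C^{\infty}(\overline{M})$ with $\underline{w} \leq w \leq \overline{w}$, and one verifies strict positivity by hand.

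For (1), set $\alpha := \min_M(1 - V/\lambda)$, which is strictly positive by compactness of $M$ and the strict inequality $V < \lambda$. I would take as super-solution $\overline{w} \equiv C$ with $C := \max\eta$; since $\max\eta \geq 1$ by hypothesis,
$$ F(x,C) = C\bigl((\lambda - V) - \lambda C^{4/(n-2)}\bigr) \leq \lambda C\bigl(1 - C^{4/(n-2)}\bigr) \leq 0, $$
and $\overline{w} \geq \eta$ on $\partial M$. As sub-solution I would take $\underline{w} \equiv \epsilon$ with $\epsilon > 0$ chosen so small that $\epsilon \leq \min\eta$ and $\epsilon^{4/(n-2)} \leq \alpha$; then $F(x,\epsilon) \geq \lambda\epsilon\bigl(\alpha - \epsilon^{4/(n-2)}\bigr) \geq 0$. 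Since $\underline{w} > 0$, strict positivity of the resulting $w$ is automatic.

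For (2), the super-solution will be $\overline{w} \equiv 1$: then $F(x,1) = -V \leq 0$ and $\overline{w} = 1 \geq \eta$ on $\partial M$ by the hypothesis $\eta \leq 1$. For the sub-solution I would use $\underline{w} \equiv 0$, which trivially gives $F(x,0) = 0$ and $\underline{w} \leq \eta$. The iteration then produces $w$ with $0 \leq w \leq 1$, and to upgrade this to $w > 0$ I would look at the first iterate $w_1$, which solves the linear problem $\Delta_g w_1 - \mu w_1 = 0$ with $w_1|_{\partial M} = \eta > 0$; the strong maximum principle for $-\Delta_g + \mu$ then forces $w_1 > 0$ on $M$, and monotonicity of the sequence in (\ref{sequence}) yields $w \geq w_1 > 0$.

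The main subtlety, and the reason case (2) has to be stated separately, is that no strictly positive constant can serve as a sub-solution when $\lambda < 0$: a direct computation gives $F(x,\epsilon) = \lambda\epsilon\bigl(1 - \epsilon^{4/(n-2)}\bigr) - V\epsilon < 0$ for every $\epsilon \in (0,1)$, so the clean construction of case (1) does not transfer. The hardest step is therefore choosing a workable sub-solution in case (2); taking $\underline{w} \equiv 0$ sidesteps the obstruction at the price of one extra appeal to the strong maximum principle, whereas smoothness of $w$ in both cases follows from the standard elliptic bootstrap applied to the monotone limit.
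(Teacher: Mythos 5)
Your proposal is correct and follows essentially the approach the paper intends: the survey defers the proof to \cite{DKN3} but explicitly frames it via the lower/upper solution (monotone iteration) method recalled after (\ref{GeneralDP}), and your constant barriers in case (1) and the pair $\underline{w}\equiv 0$, $\overline{w}\equiv 1$ in case (2) are valid choices, with the extra strong maximum principle step correctly supplying the strict positivity (and hence the smoothness via bootstrap, since $w\mapsto w^{\frac{n+2}{n-2}}$ is smooth away from $0$) that the degenerate subsolution $\underline{w}\equiv 0$ does not give by itself.
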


We now finish the construction of counterexamples to uniqueness to \textbf{(Q2)} as follows. First, let us fix a frequency $\lambda \in \R$.

\vspace{0.2cm}
1. Assume that $\lambda >0$. Consider a potential $V = V(x) \in C^\infty(M)$ such that $0<V(x)<\lambda$ and such that $\lambda$ does not belong to the Dirichlet spectrum of $-\Delta_g +V$. This is always possible since the discrete spectrum of $-\Delta_g + V$ is unstable under small perturbations of $V$. Now, consider a potential $\tilde{V} = \tilde{V}_{k,t}(x)$ isospectral to $V$ as in (\ref{IsoPot}) and such that $0<\tilde{V}(x) < \lambda$. Observe that this can always been achieved for small enough $-\epsilon < t < \epsilon$ thanks to Remark 3.3 in \cite{DKN3}. Finally, consider a smooth positive function $\eta$ on $\partial M$ such that $\eta =1$ on $\Gamma_D \cup \Gamma_N$ and such that $\max \eta \geq 1$. Then, Proposition \ref{q-to-c-1} implies the existence of smooth positive conformal factors $c$ and $\tilde{c}$ such that
$$
  V_{g,c,\lambda} = V, \quad c = 1 \ \textrm{on} \ \Gamma_D \cup \Gamma_N,
$$
and
$$
  V_{g,\tilde{c},\lambda} = \tilde{V}, \quad \tilde{c} = 1 \ \textrm{on} \ \Gamma_D \cup \Gamma_N.
$$
But from Theorem \ref{NonUniquenessQ3}, we have
$$
  \Lambda_{g, V, \Gamma_D, \Gamma_N}(\lambda) = \Lambda_{g, \tilde{V}, \Gamma_D, \Gamma_N}(\lambda).
$$
Therefore from Proposition \ref{Link-c-to-V}, we conclude that
$$
  \Lambda_{c^4 g, \Gamma_D, \Gamma_N}(\lambda) = \Lambda_{\tilde{c}^4 g, \Gamma_D, \Gamma_N}(\lambda).
$$

\vspace{0.2cm}
2. Assume that $\lambda \leq 0$. Consider a potential $V(x)>0$ and a smooth positive function $\eta$ on $\partial M$ such that $\eta =1$ on $\Gamma_D \cup \Gamma_N$ and such that $\eta \leq 1$. Clearly, $\lambda$ does not belong to the Dirichlet spectrum of $-\Delta_g +V$. Then, we follow the same strategy as in the previous case.

\vspace{0.2cm}
It is worth repeating that the metrics $c^4 g$ and $\tilde{c}^4 g$ constructed above are not related by the new gauge invariance introduced in Section \ref{1} since they correspond - through the link (\ref{Link}) - to different potentials $V = V_{g,c,\lambda}$ and $\tilde{V} = V_{g,\tilde{c},\lambda}$. We have thus constructed a large class of counterexamples to uniqueness for the anisotropic Calder\'on problem \textbf{(Q2)} in the case where the Dirichlet and Neumann data are measured on disjoint sets of the boundary. These non-uniqueness results hold modulo this new gauge invariance.

We summarize our conclusions as follows:

\begin{thm} \label{NonUniquenessQ4}
  Let $M = [0,1] \times K$ be a cylindrical manifold having two ends equipped with a warped product metric $g$ as in (\ref{Metric}). Let $\Gamma_D, \Gamma_N$ be open sets that belong to different connected components of $\partial M$. Let $\lambda \in \R$ be a fixed frequency.  Then there exist an infinite number of smooth positive conformal factors $c$ and $\tilde{c}$ on $M$ with aren't gauge equivalent in the sense of Definition \ref{Gauge0}, and such that
$$
  \Lambda_{c^4 g, \Gamma_D, \Gamma_N}(\lambda) = \Lambda_{\tilde{c}^4 g, \Gamma_D, \Gamma_N}(\lambda).
$$	
\end{thm}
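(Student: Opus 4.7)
The plan is to assemble the ingredients already developed in the paper, namely the non-uniqueness result for potentials (Theorem \ref{NonUniquenessQ3}), the bridge between \textbf{(Q2)} and \textbf{(Q3)} (Proposition \ref{Link-c-to-V}), the existence results for the nonlinear Dirichlet problem (Propositions \ref{q-to-c-0}, \ref{q-to-c-1}), and the obstruction from Lemma \ref{lemmafactor} that prevents the constructed metrics from being gauge-related in the sense of Definition \ref{Gauge0}. The proof splits into the case $\lambda>0$ and the case $\lambda\le 0$, following exactly the two numbered cases sketched in the paragraphs preceding the statement.

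First I would fix the frequency $\lambda$ and choose a smooth potential $V=V(x)\in C^\infty(M)$ depending only on the cylindrical variable, with $V>0$ everywhere, and if $\lambda>0$ with the further constraint $0<V<\lambda$. By perturbing $V$ slightly if necessary one can ensure that $\lambda$ avoids the Dirichlet spectrum of $-\Delta_g+V$. I would then apply the Pöschel--Trubowitz isospectral construction (\ref{IsoPot}) to produce a smooth potential $\tilde V=V_{k,t}$ distinct from $V$ but isospectral to it for the boundary value problem (\ref{Eq3}); for the parameter $t$ taken small enough, $\tilde V$ still satisfies the same sign and magnitude constraints as $V$ (this is precisely the observation recalled from Remark 3.3 in \cite{DKN3}). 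Theorem \ref{NonUniquenessQ3} then gives
$$\Lambda_{g,V,\Gamma_D,\Gamma_N}(\lambda)=\Lambda_{g,\tilde V,\Gamma_D,\Gamma_N}(\lambda),$$
since $\Gamma_D$ and $\Gamma_N$ sit in distinct connected components of $\partial M$.

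Next I would pick a smooth positive boundary profile $\eta$ on $\partial M$ satisfying $\eta\equiv 1$ on $\Gamma_D\cup\Gamma_N$ and adapted to the sign of $\lambda$ (with $\max\eta\ge 1$ if $\lambda>0$ and $\eta\le 1$ if $\lambda\le 0$), and then apply Proposition \ref{q-to-c-1} (or Proposition \ref{q-to-c-0} when $\lambda=0$) twice: once with the right-hand potential $V$ and once with $\tilde V$. This produces smooth positive solutions $w,\tilde w$ of the nonlinear Dirichlet problem (\ref{DirichletPb-q}), from which, setting $c=w^{1/(n-2)}$ and $\tilde c=\tilde w^{1/(n-2)}$ and using (\ref{Vgc}), I obtain
$$V_{g,c,\lambda}=V,\qquad V_{g,\tilde c,\lambda}=\tilde V,\qquad c=\tilde c=1\ \text{on}\ \Gamma_D\cup\Gamma_N.$$
Proposition \ref{Link-c-to-V}, which applies since $\Gamma_D\cap\Gamma_N=\emptyset$, then converts the potential-level equality of DN maps into the metric-level equality
$$\Lambda_{c^4g,\Gamma_D,\Gamma_N}(\lambda)=\Lambda_{\tilde c^4g,\Gamma_D,\Gamma_N}(\lambda).$$

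Finally I would verify that $c^4g$ and $\tilde c^4g$ are not gauge equivalent in the sense of Definition \ref{Gauge0}. If they were, the ratio $\tilde c/c$ would satisfy the nonlinear equation (\ref{factor}) with base metric $c^4g$, and Lemma \ref{lemmafactor} would force $V_{g,c,\lambda}=V_{g,\tilde c,\lambda}$, that is $V=\tilde V$, contradicting the fact that $\tilde V$ was chosen strictly distinct from $V$ (one can vary the index $k$ and the parameter $t$ in (\ref{Iso2}) to obtain an infinite family of such pairs). This yields the claimed infinite family of non-gauge-equivalent counterexamples. The step I expect to require the most care is the simultaneous matching of the inequality hypotheses needed by Proposition \ref{q-to-c-1} (on both $V$ and on the isospectral $\tilde V$) with the avoidance of the Dirichlet spectrum at the fixed frequency $\lambda$; this is handled by choosing the deformation parameter $t$ in (\ref{IsoPot}) sufficiently small, so that $\tilde V$ remains in the admissible open set where the lower/upper solution method applies to both Dirichlet problems.
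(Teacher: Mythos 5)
Your proposal is correct and follows essentially the same route as the paper: choose $V$ (with $0<V<\lambda$ if $\lambda>0$, $V>0$ if $\lambda\le 0$) avoiding the Dirichlet spectrum, take an isospectral $\tilde V=V_{k,t}$ via (\ref{IsoPot}) with $t$ small, invoke Theorem \ref{NonUniquenessQ3}, solve (\ref{DirichletPb-q}) via Propositions \ref{q-to-c-0}--\ref{q-to-c-1} to get $c,\tilde c$ equal to $1$ on $\Gamma_D\cup\Gamma_N$, transfer the equality of DN maps through Proposition \ref{Link-c-to-V}, and rule out gauge equivalence by Lemma \ref{lemmafactor} since $V\ne\tilde V$. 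This is exactly the construction given in the paper, including the splitting into the cases $\lambda>0$ and $\lambda\le 0$ and the generation of infinitely many examples by varying $k$ and $t$.
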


\subsection{Remarks on the case of manifolds with corners} \label{4}

In the previous non-uniqueness results for the anisotropic Calder\'on problems \textbf{(Q2)} and \textbf{(Q3)} modulo the new gauge invariance, we considered \emph{smooth} compact connected cylindrical manifolds equipped with a warped product metric and having \emph{two ends}. We indicate in this section that if we remove the assumption of smoothness for the manifold, then we can allow a connected boundary for $M$ and still obtain counterexamples to uniqueness for the Calder\'on problem.

More precisely, consider the product manifold $M = [0,1] \times K$ where $K$ is now a compact connected Riemannian manifold with boundary of dimension $n-1$. Note that the boundary of $M$ is now connected and given by:
$$
  \partial M = \left( \{0\} \times K \right) \cup \left( \{1\} \times K \right) \cup \left( (0,1) \times \partial K \right).
$$
On the other hand, we clearly lose the smoothness of the manifold since $M$ has corners. Nevertheless, we can - almost verbatim - use the previous constructions of counterexamples to uniqueness in the smooth case to construct counterexamples in this new setting.

Let us still denote the two ends of the cylinder by $\Gamma_0 = \{0\} \times K$ and $\Gamma_1 = \{1\} \times K$. The important observation is that, given Dirichlet and Neumann data $\Gamma_D, \Gamma_N$ some open subsets of $\Gamma_0 \cup \Gamma_1$ and a potential $V = V(x) \in L^\infty(M)$ or $L^2(M)$, we can construct the corresponding \emph{partial} DN map $\Lambda_{g,V,\Gamma_D,\Gamma_N}(\lambda)$ in essentially the same way as in Section \ref{3}. In fact, in this particular situation, we are still able to use separation of variables for the Dirichlet problem (\ref{Eq0-Schrodinger}) if we consider now a decomposition of the solutions onto the Hilbert basis of harmonics $Y_k$ of the Dirichlet Laplacian $-\triangle_K$ on $K$. This means that the $(Y_k)_{k \geq 1}$ are now the normalized eigenfunctions of $-\triangle_K$ associated to the \emph{Dirichlet} spectrum $(\mu_k)_{k \geq 1}$ ordered such that
$$
  0 < \mu_0 < \mu_1 \leq \mu_2 \leq \dots \leq \mu_k \to \infty.
$$
As a consequence, the DN map $\Lambda_{g,V,\Gamma_D,\Gamma_N}(\lambda)$ can be "diagonalized" in the Hilbert basis $(Y_k)_{k \geq 1}$ when $\Gamma_D, \Gamma_N \subset \Gamma_0 \cup \Gamma_1$. The last important point is to see that once it is restricted to a fixed harmonic $<Y_k>$, the DN map still has the representation (\ref{DN-Partiel}) which only involves the characteristic and Weyl-Titchmarsh functions associated to the countable family of one-dimensional boundary value problems (\ref{Eq3}) parametrized by the Dirichlet spectrum $(\mu_k)$. This does not affect the previously stated results. We thus conclude that the counterexamples to uniqueness obtained in Section \ref{2} for the problem \textbf{(Q3)} and \ref{3} for the problem \textbf{(Q2)} hold true without any change in this setting. This shows that we can remove the non-connectedness assumption for the boundary in the case of manifolds with corners.

\vspace{0.8cm}
\noindent \textbf{Acknowledgements}: The second author would like to thank Professor Shing-Tung Yau and the Center for Mathematical Sciences at Harvard University for their hospitality and financial support.  \\


\end{document}